\documentclass[12pt]{article}
\usepackage[utf8]{inputenc}
\usepackage{amsfonts}
\usepackage{scrextend}
\usepackage{mathtools}
\usepackage{float}
\usepackage{amsmath}
\usepackage[margin=1.2in]{geometry}
\usepackage{authblk}
\usepackage{amssymb}
\usepackage{epigraph}
\usepackage[english]{babel}
\usepackage[nottoc]{tocbibind}
\usepackage{tikz-cd}
\usepackage{graphicx}
\usetikzlibrary{matrix}
\usepackage{amsthm, enumitem}
\usepackage[mathscr]{euscript}
 \let\mathscr\relax
\usepackage[scr]{rsfso}
\usepackage{comment}

\usepackage{bm, mathdots, accents}

\parindent=0cm
\parskip=2mm

\frenchspacing

\setlength{\topmargin}{-0.8cm} 
\setlength{\textheight}{22.0cm}
\setlength{\oddsidemargin}{.4cm}
\setlength{\textwidth}{15.7cm}

\theoremstyle{definition}
\newtheorem{nummer}{ }[section]

\newtheorem{thm}[nummer]{\sc Theorem}
\newtheorem*{mainthm}{\sc Main Theorem}
\newtheorem{prp}[nummer]{\sc Proposition}
\newtheorem{lem}[nummer]{\sc Lemma}

\newtheorem{defi}[nummer]{\sc Definition}
\newtheorem{rmk}{\sc Remark}
\newtheorem{problem}{\sc Problem}

\newcounter{faelle} 
\renewcommand{\thefaelle}{\rm(\alph{faelle})}

\renewcommand\qed{\relax\ifmmode~\hfill$\dashv$\else\unskip\nobreak~\hfill$\dashv$\fi}

\def\epsilon{\varepsilon}

\newcommand\func{{}^\omega\omega}

\newcommand\ran{\operatorname{ran}}

\newcommand\fin{\operatorname{fin}}

\newcommand\cov{\mathit{cov}}

\newcommand\forcing{\mathbb{M}(\bar{\mathcal{U}})}

\renewcommand{\phi}{\varphi}
\renewcommand{\theta}{\vartheta}

\begin{document}

\begin{center}
{\Large\sc There may be exactly $n$ $Q$-points}\\[1.8ex]

{\small Lorenz Halbeisen}\\[1.2ex] 
{\scriptsize Department of Mathematics, ETH Z\"urich, 
8092 Z\"urich, Switzerland\\ 
lorenz.halbeisen@math.ethz.ch}\\[1.8ex]

{\small Silvan Horvath}\\[1.2ex] 
{\scriptsize Department of Computer Science, ETH Z\"urich, 
8092 Z\"urich, Switzerland\\ 
silvan.horvath@inf.ethz.ch}\\[1.8ex]

{\small Tan {\"O}zalp}\\[1.2ex] 
{\scriptsize Department of Mathematics, University of Notre Dame,
Notre Dame, IN 46556, USA\\
aozalp@nd.edu}\\[1.8ex]

\end{center}


\begin{quote}
{\small {\bf Abstract.} 
We generalize the main result of \cite{halbeisen2025there} and show the consistency of the statement ``There are exactly $n$ $Q$-points up to isomorphism" for any finite $n$. Furthermore, we show that the above statement for $n=2$ can alternatively be obtained by a length-$\omega_2$ countable support iteration of Matet-Mathias forcing restricted to a Matet-adequate family.}
\end{quote}

\begin{quote}
\small{{\bf key-words\/}: $Q$-point,  Ramsey ultrafilter, Mathias forcing, iterated proper forcing, combinatorics with block sequences, Matet-adequate families}\\
\small{\bf 2010 Mathematics Subject Classification\/}: {\bf 03E35}\ {03E17}\ {03E05}\ {05C55}
\end{quote}


\setcounter{section}{0}
\section{Introduction}
 
 In a recent paper \cite{halbeisen2025there}, the first two authors and Shelah have shown that there is a unique $Q$-point in the model obtained by iterating Mathias forcing restricted to an increasing sequence of Ramsey ultrafilters $\omega_2$-many times. In this paper, we generalize this result and show that after iterating an $n$-dimensional variant of restricted Mathias forcing introduced by Shelah and Spinas~\cite{shelah1998distributivity}, the resulting model contains exactly $n$ $Q$-points. For an introduction to $Q$-points and Ramsey ultrafilters, see Blass' article in the handbook~\cite{blass2009combinatorial}.
 
Furthermore, we consider a variation of a recent model by Mildenberger~\cite{mildenberger2024exactly}, which contains exactly two $Q$-points as a consequence of containing exactly three \emph{near-coherence classes of ultrafilters} (defined below). We show that replacing Matet forcing restricted to a \emph{Matet-adequate family} with its Mathias-style variant, also yields a model with exactly two $Q$-points, while its number of near-coherence classes is $2^{\mathfrak{c}}$.

It is well-known that there are $2^{\mathfrak{c}}$ $Q$-points in models with $\cov(\mathcal{M})=\mathfrak{d}$ (see Mill{\'a}n~\cite{millan2007note}), and that there are no $Q$-points in the Mathias- and Laver models (see Miller~\cite{miller1980there}), as well as in models in which all ultrafilters are nearly coherent (see Blass~\cite{blass86nearcoherence}) -- such as the Miller model (see Blass-Shelah~\cite{blass1989near}).

\section{A Model with Exactly $n$ $Q$-points}

\begin{mainthm}
Let $n \in \omega$. It is consistent that there are exactly $n$ $Q$-points.
\end{mainthm}

Since models containing no $Q$-points are well-known, we consider the case $n > 0$. Our construction is a straightforward generalization of the method in \cite{halbeisen2025there}, which is why we abstain from an overly detailed presentation and focus on showing how the necessary lemmata from \cite{halbeisen2025there} translate to the general setting. 

\begin{defi}
Let $\bar{\mathcal{U}}=\langle\mathcal{U}_0, \mathcal{U}_1, ..., \mathcal{U}_{n-1} \rangle$ be a finite sequence of non-isomorphic Ramsey ultrafilters. Let $\forcing$ consist of conditions $\langle s, \bar X\rangle$, where
\begin{itemize}
\item $s \in \fin(\omega)$,
\item $\bar X \in \prod_{k \in n}\mathcal{U}_k \cap [\omega \setminus (\max s)^{+}]^{\omega}$.
\end{itemize}
If $\langle m_0, m_1, ..., m_{|s|-1}\rangle$ is the increasing enumeration of $s$, we denote by $s_k \in \fin(\omega)$ the set $s_k:=\{m_i: i \equiv k \mod n\}$. We define $\langle s, \bar X\rangle \leq_{\forcing} \langle t, \bar Y \rangle$ if and only if
\begin{itemize}
\item $s \supseteq t$
\item $\forall k \in n: \bar X(k) \subseteq \bar Y(k)$
\item $\forall k \in n: s_k \setminus t_k \subseteq \bar Y (k)$
\end{itemize}
We write $\bar X \in \bar{\mathcal{U}}$ if and only if $\bar X \in \prod_{k \in n}\mathcal{U}_k$. For $\bar X, \bar Y \in \bar{\mathcal{U}}$, we write $\bar X \leq_{\bar{\mathcal{U}}}\bar Y$ if and only if $\bar X(k) \subseteq \bar Y(k)$ for every $k \in n$. For $ m\in \omega$, we denote by $\bar X \setminus m$ the sequence $\langle \bar X(k)\setminus m: k \in n\rangle$. Finally, if $G$ is $\forcing$-generic, we denote by ${\eta}$ the generic real $\bigcup \{s: \exists \langle s, \bar X\rangle \in G\}$, and by $\eta_k$ the real $\{i \in \eta: i \equiv k \mod n\}$, which is a pseudo-intersection of $\mathcal{U}_k$.
\end{defi}

We begin by listing the basic properties of $\bar{\mathcal{U}}$ and $\forcing$ we will make use of. The first is a very useful game characterization proved by Shelah and Spinas. For $i \in \omega$, we denote by $i_{\text{mod}\, n}$ the element in $n$ with $i \equiv i_{\text{mod}\, n} \mod n$

\begin{defi}
Let $\bar{\mathcal{V}}=\langle \mathcal{V}_0, \mathcal{V}_1, ..., \mathcal{V}_{n-1}\rangle$ be a sequence of ultrafilters. The game $G(\bar{\mathcal{V}})$ is played between the Maiden and Death and proceeds as follows: In the $i$'th round, the Maiden plays some $\bar X_i \in \bar{\mathcal{V}}$ and Death responds by playing some $n_i \in \bar X_i(i_{\text{mod}\, n})$. Death wins if and only if for each $k \in n: \{n_i: i \equiv k \mod n\}\in \mathcal{V}_k$.
\end{defi}

\begin{lem}[{\cite[Corollary~1.3]{shelah1998distributivity}}]
If $\bar{\mathcal{U}}$ is a sequence of pairwise non-isomorphic Ramsey ultrafilters, the Maiden has no winning strategy in the game $G(\bar{\mathcal{U}})$.
\end{lem}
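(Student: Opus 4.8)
This is \cite[Corollary~1.3]{shelah1998distributivity}; the quickest option is to invoke it, but here is how I would prove it directly. Fix a strategy $\sigma$ for the Maiden. It suffices to produce an increasing sequence $m_0<m_1<m_2<\cdots$ that is a \emph{legal play against $\sigma$} -- meaning $m_i\in\sigma(\langle m_0,\dots,m_{i-1}\rangle)(i_{\mathrm{mod}\,n})$ for every $i$ -- and for which $\{m_i:i\equiv k\bmod n\}\in\mathcal U_k$ for every $k\in n$; such a sequence describes a run of $G(\bar{\mathcal U})$ consistent with $\sigma$ that Death wins.

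Call an increasing $s\in\fin(\omega)$ a \emph{partial play} if it is legal against $\sigma$ in the above sense. For $k\in n$ and $m\in\omega$ let $C^k_m\subseteq\omega$ be the intersection of $\omega\setminus(m+1)$ with the (finitely many) sets $\sigma(s)(k)$, where $s$ ranges over the partial plays with $s\subseteq\{0,\dots,m\}$ and $|s|\equiv k\bmod n$. Then $C^k_m\in\mathcal U_k$, being a finite intersection of members of $\mathcal U_k$, and $C^k_m$ is $\subseteq$-decreasing in $m$. The point of the definition: if $\langle m_i\rangle_{i\in\omega}$ is increasing and $m_i\in C^{i_{\mathrm{mod}\,n}}_{m_{i-1}}$ for every $i$ (with the convention $C^k_{-1}:=\sigma(\langle\rangle)(k)$), then a routine induction on $i$ shows that $\langle m_0,\dots,m_{i-1}\rangle$ is a partial play -- since this very tuple, as a subset of $\{0,\dots,m_{i-1}\}$ of size $\equiv i_{\mathrm{mod}\,n}$, is one of the $s$ in the intersection defining $C^{i_{\mathrm{mod}\,n}}_{m_{i-1}}$ -- so that $\langle m_i\rangle$ is a legal play. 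Everything therefore reduces to choosing an increasing $\langle m_i\rangle$ with $m_i\in C^{i_{\mathrm{mod}\,n}}_{m_{i-1}}$ for all $i$ and, in addition, $\{m_i:i\equiv k\bmod n\}\in\mathcal U_k$ for every $k$.

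For $n=1$ this is immediate from the classical fact that a Ramsey ultrafilter is a happy family: the diagonalization property of $\mathcal U_0$ -- every $\subseteq$-decreasing $\omega$-sequence in $\mathcal U_0$ admits a diagonal lying in $\mathcal U_0$ -- applied to $\langle C^0_m\rangle_{m}$ yields exactly such a sequence (see \cite{blass2009combinatorial}). For $n>1$ one must run the $n$ per-coordinate selective diagonalizations \emph{simultaneously}, so that the single interleaved sequence $\langle m_i\rangle$ has each residue class $\{m_i:i\equiv k\bmod n\}$ back inside $\mathcal U_k$ while the blocks remain perfectly interleaved. This ``synchronized selective extraction'' for the tuple $\bar{\mathcal U}$ is the heart of the matter, and it is exactly here that \emph{pairwise non-isomorphism} is needed -- via the canonical Ramsey (partition) theorem for finitely many pairwise non-isomorphic Ramsey ultrafilters. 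I expect this synchronization to be the main obstacle; in a detailed write-up I would isolate it as a separate lemma (or, as the authors do here, simply cite \cite{shelah1998distributivity}).

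That some hypothesis on $\bar{\mathcal U}$ is unavoidable is easy to see: if $\mathcal U_j=\mathcal U_k$ for some $j\neq k$, the Maiden wins -- at every coordinate-$k$ round she plays, as her $k$-th component, a member of $\mathcal U_k$ disjoint from all coordinate-$j$ moves made so far, and symmetrically at coordinate-$j$ rounds. This forces $\{m_i:i\equiv j\bmod n\}$ and $\{m_i:i\equiv k\bmod n\}$ to be disjoint, so they cannot both lie in the common ultrafilter $\mathcal U_j=\mathcal U_k$. Pairwise non-isomorphism is the natural strengthening of distinctness under which the synchronized extraction goes through.
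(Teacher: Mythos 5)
The paper offers no proof of this lemma at all---it is imported verbatim as \cite[Corollary~1.3]{shelah1998distributivity}---so your first sentence (``invoke it'') already matches the paper exactly, and nothing more is required of you. Your supplementary direct argument is worth commenting on, though. The reduction you carry out is correct and is the standard first move: since there are only finitely many partial plays inside $\{0,\dots,m\}$, the sets $C^k_m$ are finite intersections of ultrafilter sets, they absorb the dependence on the strategy, and your induction showing that any increasing sequence with $m_i\in C^{i_{\mathrm{mod}\,n}}_{m_{i-1}}$ is a legal play is sound. Your counterexample for $\mathcal{U}_j=\mathcal{U}_k$ is also correct. In fact the reduction can be pushed one step further than you take it: by selectivity, choose for each $k$ a strong diagonal $A_k\in\mathcal{U}_k$ with $A_k\setminus(m+1)\subseteq C^k_m$ for all $m$; then the coupling between coordinates disappears entirely, and all that remains is to find $A_k'\subseteq A_k$ with $A_k'\in\mathcal{U}_k$ whose union can be enumerated increasingly as $m_0<m_1<\cdots$ with $m_i\in A'_{i_{\mathrm{mod}\,n}}$, i.e.\ $n$ sets that interleave perfectly in cyclic order.

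That interleaving lemma is the genuine gap in your write-up, and you are candid about it: your paragraph on ``synchronized selective extraction'' asserts the step rather than proving it, and it is precisely this step that constitutes the content of the Shelah--Spinas result. The canonical Ramsey theorem you gesture at is not by itself the mechanism; what is actually used is that pairwise non-isomorphic Ramsey ultrafilters are pairwise \emph{not nearly coherent} (Blass), so that for a suitable interval partition one can find sets in the respective ultrafilters meeting no common or adjacent intervals, and selectivity then thins these to alternating sets. Since you explicitly defer this to the citation---which is all the paper does---your proposal is acceptable as it stands, but the direct proof you sketch should not be mistaken for complete: everything before the synchronization step is routine, and everything after it is the theorem being cited.
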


\begin{lem}[{\cite[Lemma~1.15]{shelah1998distributivity}}]
$\forcing$ has the pure decision property, i.e., for any sentence $\phi$ in the forcing language and any $\forcing$-condition $\langle s, \bar X \rangle$, there exists $\bar Y \leq_{\mathcal{U}} \bar X$ such that either $\langle s,\bar Y\rangle \Vdash_{\forcing} \phi$ or $\langle s,\bar Y\rangle \Vdash_{\forcing} \neg \phi$.
\end{lem}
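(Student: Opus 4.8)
The plan is to run the classical Mathias--Prikry argument for pure decision, using the game $G(\bar{\mathcal{U}})$ of the preceding lemma as the replacement for the partition (selectivity) property of a single Ramsey ultrafilter. Fix $\langle s,\bar X\rangle\in\forcing$ and a sentence $\phi$. Call $t\in\fin(\omega)$ with $t\supseteq s$ \emph{admissible} if $t_k\setminus s_k\subseteq\bar X(k)$ for all $k\in n$ (so that $\langle t,\bar Z\rangle\le_{\forcing}\langle s,\bar X\rangle$ whenever $\bar Z\le_{\bar{\mathcal{U}}}\bar X\setminus(\max t)^{+}$), and for an admissible $t$ say that $t$ \emph{accepts} $\phi$ if some such $\langle t,\bar Z\rangle\Vdash_{\forcing}\phi$, that $t$ \emph{rejects} $\phi$ if some such $\langle t,\bar Z\rangle\Vdash_{\forcing}\neg\phi$, and that $t$ \emph{decides} $\phi$ if it accepts or rejects. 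Since two candidate witnesses may be intersected blockwise inside $\bar{\mathcal{U}}$, no admissible $t$ both accepts and rejects. It is enough to prove that $s$ decides $\phi$, since the witnessing $\bar Y\le_{\bar{\mathcal{U}}}\bar X$ then has $\langle s,\bar Y\rangle$ deciding $\phi$. So assume toward a contradiction that $s$ is undecided; in particular $\langle s,\bar W\rangle$ decides neither $\phi$ nor $\neg\phi$ for every $\bar W\le_{\bar{\mathcal{U}}}\bar X$.

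The first ingredient is the one-step lemma: if $t$ is admissible, $\bar Z\le_{\bar{\mathcal{U}}}\bar X\setminus(\max t)^{+}$, and $\langle t\cup\{a\},\,\bar Z\setminus(a)^{+}\rangle\Vdash_{\forcing}\phi$ for \emph{every} $a\in\bar Z\bigl(|t|_{\text{mod}\,n}\bigr)$, then $\langle t,\bar Z\rangle\Vdash_{\forcing}\phi$, and symmetrically for $\neg\phi$. This is the usual genericity argument: in an extension by a generic $H\ni\langle t,\bar Z\rangle$ with generic real $\eta$, the least element $a$ of $\eta$ above $\max t$ lies in $\bar Z\bigl(|t|_{\text{mod}\,n}\bigr)$ --- the element appended to a stem of size $|t|$ falls into block $|t|_{\text{mod}\,n}$ --- and $\langle t\cup\{a\},\bar Z\setminus(a)^{+}\rangle\in H$ forces $\phi$; only that one block is involved. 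The form I actually use is its contrapositive: if $t$ is admissible and undecided and acceptance and rejection of the one-step extensions of $t$ inside $\langle t,\bar Z\rangle$ are already witnessed by $\bar Z$ itself, then $\{a\in\bar Z(|t|_{\text{mod}\,n}):t\cup\{a\}\text{ is undecided}\}\in\mathcal{U}_{|t|_{\text{mod}\,n}}$, for otherwise the ``accept'' or the ``reject'' part would be large, and replacing the relevant block of $\bar Z$ by it and applying the one-step lemma would decide $t$.

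The heart of the proof is to use $G(\bar{\mathcal{U}})$ to produce a single $\bar Z\le_{\bar{\mathcal{U}}}\bar X$ such that every admissible $t\supseteq s$ with $t_k\setminus s_k\subseteq\bar Z(k)$ for all $k$ --- the stems ``below $\langle s,\bar Z\rangle$'' --- is undecided, and such that acceptance or rejection of any such stem is witnessed by $\bar Z$ itself. One plays the game with the blocks cyclically relabelled, so that Death's move at round $i$ comes from block $(|s|+i)_{\text{mod}\,n}$; this is again a $G(\bar{\mathcal{V}})$ for a sequence of pairwise non-isomorphic Ramsey ultrafilters, so by the preceding lemma the Maiden still has no winning strategy. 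Her strategy, given Death's moves $n_0,\dots,n_{i-1}$ forming $u_i:=s\cup\{n_0,\dots,n_{i-1}\}$, is to play some $\bar X_i\le_{\bar{\mathcal{U}}}\bar X_{i-1}$ whose block $(|s|+i)_{\text{mod}\,n}$ is the finite intersection, over the admissible sub-stems $u\subseteq u_i$ with $|u\setminus s|\equiv i\pmod n$, of the sets $\{a:u\cup\{a\}\text{ is undecided}\}$, while using her freedom on the remaining blocks to diagonalize the countably many ``uniformize the witnesses of stem $u$'' requirements. By the contrapositive one-step fact (applied to each such $u$, undecided by induction) this intersection is $\mathcal{U}_{(|s|+i)_{\text{mod}\,n}}$-large, so the strategy is legal; since it is not winning, there is a play with $\bar Z(k):=\{n_i:(|s|+i)\equiv k\pmod n\}\in\mathcal{U}_k$ for all $k$, whence $\bar Z\le_{\bar{\mathcal{U}}}\bar X$. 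As a stem $u$ below $\langle s,\bar Z\rangle$ can be extended by one element only by appending some $n_i$ with $|u\setminus s|\equiv i\pmod n$, an induction on $|t\setminus s|$ shows that every stem below $\langle s,\bar Z\rangle$ is undecided, and the diagonalization gives the uniform-witness property.

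To finish: $\langle s,\bar Z\rangle\le_{\forcing}\langle s,\bar X\rangle$ cannot force $\phi$ (else $s$ accepts $\phi$, contradicting that $s$ is undecided) nor $\neg\phi$, hence decides neither; by density there is $\langle u,\bar C\rangle\le_{\forcing}\langle s,\bar Z\rangle$ with $\langle u,\bar C\rangle\Vdash_{\forcing}\phi$, and $u$ is a stem below $\langle s,\bar Z\rangle$, so it is undecided --- yet $\bar C$ witnesses that it accepts $\phi$, a contradiction. Hence $s$ decides $\phi$, as required. The step I expect to be the real obstacle is the construction of $\bar Z$ above: orchestrating a \emph{single} losing play of $G(\bar{\mathcal{U}})$ so as to keep \emph{all} reachable stems undecided while simultaneously making their acceptance/rejection witnesses uniform across all $n$ blocks at once --- in particular, checking that the candidate strategy is always legal and that the remaining blocks can absorb the uniformization requirements. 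This coordinated, $n$-dimensional version of the selective-ultrafilter diagonalization is precisely the content of \cite[Lemma~1.15]{shelah1998distributivity}; the partition property of a single Ramsey ultrafilter does not suffice for it, which is exactly why the game characterization is invoked.
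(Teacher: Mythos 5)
The paper does not actually prove this lemma --- it is quoted from Shelah--Spinas \cite[Lemma~1.15]{shelah1998distributivity} --- so there is no in-text proof to compare against; the closest analogues are the game arguments in Lemmas~\ref{lem:laver} and~\ref{lem:deciding}. Measured against those and against the cited source, your architecture is the right one: the accept/reject/undecided trichotomy (with blockwise intersection of witnesses ruling out ``both''), the one-step lemma via genericity (correctly noting that a one-point extension of a stem of length $|t|$ lands in block $|t|_{\text{mod}\,n}$), a single play of $G(\bar{\mathcal{U}})$ with the blocks cyclically shifted by $|s|$ to produce $\bar Z$ below which every stem is undecided, and the density contradiction at the end. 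All of that is sound.

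The one genuine soft spot is exactly where you place it, and as written it is circular rather than merely hard. Your ``contrapositive one-step fact'' carries the hypothesis that acceptance/rejection of the one-point extensions of $u$ is \emph{already witnessed by $\bar Z$ itself}, yet you invoke it at round $i$ to certify that the Maiden's move is legal --- i.e., during the very play that is supposed to produce $\bar Z$, and for extensions $u\cup\{a\}$ whose $a$ are Death's \emph{future} moves, so their witnesses cannot have been absorbed ``on the remaining blocks'' in earlier rounds. The repair is to make the contrapositive one-step fact unconditional by first establishing a cross-block diagonal intersection lemma: for any $\bar W_a\in\bar{\mathcal{U}}$ ($a\in\omega$) and any $j\in n$ there is $\bar W\in\bar{\mathcal{U}}$ with $\bar W\setminus a^{+}\leq_{\bar{\mathcal{U}}}\bar W_a$ for every $a\in\bar W(j)$; this is itself a routine play of $G(\bar{\mathcal{U}})$ (at each round intersect componentwise with $\bar W_{n_{i-1}}$), exactly as in the proof of Lemma~\ref{lem:laver}. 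With that in hand, ``$u$ undecided implies $\{a: u\cup\{a\}\text{ undecided}\}\in\mathcal{U}_{|u|_{\text{mod}\,n}}$'' holds with no uniformization hypothesis, the Maiden's strategy in your main play is legal for free, and the rest of your argument goes through. (Equivalently, one can run two successive plays: a first ``capturing'' play producing $\bar Z_0$ below which every deciding stem decides via $\bar Z_0\setminus(\max u)^{+}$, then your ``all stems undecided'' play below $\bar Z_0$.) With this one lemma inserted, your proof is complete and matches the intended argument of the cited result.
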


\begin{lem}\label{lem:laver}
$\forcing$ has the Laver property, i.e., if $\undertilde{g}$ is a $\forcing$-name for an element of ${^\omega}\omega$ such that there exists $f\in {^\omega}\omega \cap \mathbf{V}$ with $\forcing\Vdash \forall i \in \omega: \undertilde{g}(i) \leq f(i)$,
then $\forcing$ forces that there is some $c: \omega \to \text{fin}(\omega)$ in $\mathbf{V}$ with
\[\forall i \in \omega: |c(i)| \leq 2^{i} \text{ and } \undertilde{g}(i) \in c(i).\]
\end{lem}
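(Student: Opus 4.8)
The plan is to run the standard fusion argument for Mathias-style forcings, adapted to the $n$-dimensional block structure of $\forcing$. Fix a condition $\langle s,\bar X\rangle$ and a name $\undertilde g$ bounded by $f\in\mathbf V$; without loss of generality assume $\langle s,\bar X\rangle\Vdash\undertilde g(i)\le f(i)$ for all $i$. Working below $\langle s,\bar X\rangle$, I would build a $\leq_{\bar{\mathcal U}}$-decreasing sequence $\langle\bar X^{(i)}:i\in\omega\rangle$ of pure side conditions together with a "fusion" sequence, so that at stage $i$ the generic lower part $s$ has been extended in all $n$ coordinates by finitely many points, and the pure part $\bar X^{(i)}$ has been shrunk so that $\undertilde g(i)$ is decided up to a small finite set of possibilities. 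The key tool is the pure decision property (the preceding lemma): for a fixed finite stem $t$ extending $s$, and for each value $j\le f(i)$, we can pass to some $\bar Y\leq_{\bar{\mathcal U}}\bar X^{(i)}$ purely deciding the sentence ``$\undertilde g(i)=j$''. Iterating over the (finitely many) values $j\le f(i)$ and intersecting, we get a single pure extension deciding $\undertilde g(i)$ outright, given that stem.

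The subtlety is that $\undertilde g(i)$ need not be decided by the stem $s$ alone; it may depend on which elements Death's generic puts into the stem next. So I would instead, at stage $i$, enumerate all ``relevant'' one-step stem extensions $t=s\cup\{m\}$ where $m\in\bar X^{(i)}(i_{\mathrm{mod}\,n})$ lies below some bound — but there are infinitely many such $m$. This is exactly where a game-theoretic or fusion-with-bookkeeping argument is needed: one shows, using the structure of the forcing, that there is a single pure condition $\bar X^{(i+1)}\leq_{\bar{\mathcal U}}\bar X^{(i)}$ such that for \emph{every} legitimate finite extension $t$ of $s$ using only the first $i$ points already committed, $\langle t,\bar X^{(i+1)}\rangle$ decides $\undertilde g(i)$; the set of possible decided values over all such $t$ has size at most $2^i$ (since there are at most $\binom{i}{?}\le 2^i$ ways to have distributed the committed points, and each gives one value). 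Here the combinatorics of $s_k:=\{m_i:i\equiv k\bmod n\}$ matters only in that the committed points are a fixed finite set once we reach stage $i$, so the count of extensions is bounded by a function of $i$; one arranges the bookkeeping (as in \cite{halbeisen2025there}) so that this count is $\le 2^i$. Then $c(i)$ is defined to be this finite set of possible values, and $c\in\mathbf V$ since the whole construction takes place in $\mathbf V$.

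Finally, take the fusion $\bar X^{*}=\langle\bigcap_i\bar X^{(i)}(k):k\in n\rangle$, which lies in $\bar{\mathcal U}$ because each $\mathcal U_k$ is an ultrafilter and, by arranging $\bar X^{(i+1)}(k)\setminus\bar X^{(i)}(k)$ to shrink only above the $i$-th committed point (the standard fusion requirement), the diagonal intersection is still in $\mathcal U_k$ — here one uses that Ramsey ultrafilters, in particular, are rapid/selective enough that this diagonalization stays in the filter, or more simply that $\mathcal{U}_k$ is a P-point so a pseudo-intersection mod finite exists and can be thinned back into $\mathcal{U}_k$. Then $\langle s,\bar X^{*}\rangle\leq_{\forcing}\langle s,\bar X\rangle$ forces $\undertilde g(i)\in c(i)$ and $|c(i)|\le 2^i$ for all $i$, as required. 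I expect the main obstacle to be the bookkeeping in the fusion that simultaneously (a) keeps the number of possible values of $\undertilde g(i)$ bounded by $2^i$ and (b) keeps each coordinate of the fusion inside $\mathcal U_k$; this is precisely the point where the argument must invoke the corresponding lemma of \cite{halbeisen2025there} and check that it goes through verbatim for the $n$-dimensional forcing, the block structure $i\mapsto i_{\mathrm{mod}\,n}$ causing no real difficulty since it only permutes which ultrafilter is consulted at each round.
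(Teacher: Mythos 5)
There is a genuine gap, and it sits exactly at the point you dismiss as ``causing no real difficulty.'' Your fusion skeleton (pure decision at each stage, decided values indexed by subsets of the committed points, hence $|c(i)|\le 2^i$) matches the paper, and for $n=1$ the final diagonalization step is indeed the standard selective-ultrafilter argument. But for $n\ge 2$ the committed points must satisfy an \emph{interleaved} constraint: in the final condition $\langle s,\bar Y\rangle$, the element occupying position $i$ (mod $n$) of the increasing enumeration of $\bigcup_k\bar Y(k)$ must be a point that was committed at stage $i$ against the side condition $\bar X^{(i)}$, must lie in coordinate $i_{\mathrm{mod}\,n}$, and the residue classes $\{n_i: i\equiv k \bmod n\}$ must land in $\mathcal{U}_k$ for every $k$ simultaneously. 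Taking, for each $k$ separately, a pseudo-intersection or selective diagonalization $Y_k\in\mathcal{U}_k$ of $\langle \bar X^{(i)}(k):i\in\omega\rangle$ does not achieve this: you have no control over which positions the elements of $Y_k$ occupy once the $n$ sets are merged into a single increasing sequence, so the link between position mod $n$ and the coordinate (and stage) at which a point was vetted is lost, and the decisions $k^i_t$ need no longer cover the stems that extensions of $\langle s,\bar Y\rangle$ can actually produce. Also, literally intersecting, $\bigcap_i\bar X^{(i)}(k)$ need not be in $\mathcal{U}_k$ at all; ``thinned back into $\mathcal{U}_k$'' is doing unexplained work.

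The paper resolves exactly this by not doing a fusion at all: the stage-by-stage pure-decision construction is packaged as a strategy for the Maiden in the Shelah--Spinas game $G(\bar{\mathcal{U}})$, and the Laver property follows because this strategy is \emph{not} winning (Lemma~2.3), so Death can defeat it, and Death's moves $n_0<n_1<\cdots$ come with the interleaving built in: $n_i\in\bar X^{(i)}(i_{\mathrm{mod}\,n})$ and $\{n_i:i\equiv k\bmod n\}\in\mathcal{U}_k$ for all $k$. This is where the hypothesis that the $\mathcal{U}_k$ are \emph{pairwise non-isomorphic} Ramsey ultrafilters enters; your argument never uses it, relying only on each $\mathcal{U}_k$ being Ramsey/P-point coordinatewise, which is a strong sign it cannot be complete as stated. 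To repair your proof you would either have to invoke the game lemma (thereby reproducing the paper's proof) or prove by hand the simultaneous interleaved selection principle, which is precisely the nontrivial content of the Shelah--Spinas result rather than a routine permutation of which ultrafilter is consulted at each round.
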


\begin{proof}
Let $\undertilde{g}$ and $f$ be as in the statement of the lemma and let $\langle s, \bar X\rangle \in \forcing$. We play the game $G(\bar{\mathcal{U}})$.

By the pure decision property, there exists $\bar Y_0 \leq_{\bar{\mathcal{U}}}\bar X$ and $k^0_{\emptyset} \leq f(0)$ such that $\langle s, \bar Y_0\rangle \Vdash_{\forcing} \undertilde{g}(0)=k^0_{\emptyset}$. The Maiden starts by playing $\bar Y_0$.

Assume $\bar Y_0 \geq_{\bar{\mathcal{U}}} \bar Y_1 \geq_{\bar{\mathcal{U}}} ... \geq_{\bar{\mathcal{U}}} \bar Y_i$ and $n_0 < n_1 < ... < n_i$ have been played, where $\forall j \leq i: n_j \in \bar Y_j(j_{\text{mod}\, n})$. For each $t \subseteq \{n_j: j \leq i\}$, there is some $k^{i+1}_t \leq f(i+1)$ and some $\bar Z_t \leq_{\bar{\mathcal{U}}}\bar Y_i \setminus n_i^{+}$ such that $\langle s \cup t, \bar Z_t\rangle \Vdash_{\forcing} \undertilde{g}(i+1)=k_t^{i+1}$. Let $\bar Y_{i+1}$ be the component-wise intersection of these $\bar Z_t$.

Since this is not a winning strategy, Death wins and the set $\bar Y(k):=\{n_i: i \equiv k \mod n\}$ is in $\mathcal{U}_k$ for every $k \in n$. It is easy to check that
\[\langle s, \bar Y\rangle \Vdash_{\forcing} \forall i \in \omega: \undertilde{g}(i) \in \{k_t^i: t \subseteq \{n_j: j < i\}\}.\]

\vspace{-1cm}
\qedhere
\end{proof}

Now, the construction of our model is as follows: We work over a ground model satisfying $\textsf{CH}$ and define a countable support iteration $\mathbb{P}_{\omega_2}:=\langle \mathbb{P}_{\xi},, \mathbb{M}(\bar{\undertilde{\mathcal{U}}}_{\xi}): \xi \in \omega_2\rangle$, where for each $\xi \in \omega_2$: 
\[ \begin{split} \mathbb{P}_\xi \Vdash &``\;\undertilde{\bar{\mathcal{U}}}_{\xi}=\langle \undertilde{\mathcal{U}}_0^{\xi}, \undertilde{\mathcal{U}}_1 ^{\xi}, ..., \undertilde{\mathcal{U}}_{n-1} ^{\xi}\rangle \text{ are non-isomorphic Ramsey ultrafilters} \\ &\text{such that } \forall \iota < \xi \;\forall k \in n: \undertilde{\mathcal{U}}_k^{\iota} \subseteq \undertilde{\mathcal{U}}_k^{\xi}. \;" \end{split}\]
At each successor stage $\xi+1 \in \omega_2$, we construct $\undertilde{\mathcal{U}}_k^{\xi+1}$ as an extension of the pseudo-intersection $\underaccent{\dot}{\eta}_k^{\xi}$ of $\undertilde{\mathcal{U}}_k^{\xi} $. This is possible by the \textsf{CH} at all intermediate stages. At limit stages $\xi$ of cofinality $\omega$, we construct $\undertilde{\mathcal{U}}_k^{\xi+1}$ on a pseudo-intersection of the tower $\langle \eta_k^{\iota}: \iota \in \xi\rangle$, and at limit stages of uncountable cofinality, $\bigcup_{\iota \in \xi}\undertilde{\mathcal{U}}_k^{\iota}$ is already forced to be a Ramsey ultrafilter, since no new reals appear at such stages.

Note that $\mathbb{P}_{\omega_2}$ is proper and satisfies the $\omega_2$-c.c. (e.g., see {~\cite[Theorem~2.10]{abraham2009proper}}). We show that for every $\mathbb{P}_{\omega_2}$-generic $G$, the Ramsey ultrafilters
\[\mathcal{U}_k^{\omega_2}:=\bigcup_{\xi \in \omega_2} \undertilde{\mathcal{U}}_k^{\xi} \,[G|_{\xi}]\]
for $k \in n$ are the only $Q$-points in $\mathbf{V}[G]$. To see this, assume by contradiction that $\mathbf{V}[G]\models ``E\text{ is a }Q\text{-point and not isomorphic to any of the }\mathcal{U}_k^{\omega_2}"$. By a standard closure argument, there exists $\delta \in \omega_2$ such that $E \cap \mathbf{V}[G|_\delta] \in \mathbf{V}[G|_\delta]$ and $\mathbf{V}[G|_\delta] \models ``E \cap \mathbf{V}[G|_\delta] \text{ is a }Q\text{-point and not isomorphic to any of the }\mathcal{U}_k^\delta"$ (see \cite{halbeisen2025there}).

Hence, by working in $\mathbf{V}[G|_\delta]$ and factoring the tail of the iteration as $\forcing \ast \undertilde{R}$, where $\undertilde{R}$ is forced to have the Laver property, the main theorem follows directly from the following proposition.

\begin{prp}\label{prop:destruction}
	Let $E$ be a $Q$-point and $\bar{\mathcal{U}}=\langle \mathcal{U}_k: k \in n\rangle$ a finite sequence of $n$ pairwise non-isomorphic Ramsey ultrafilters such that $E$ is not isomorphic to any of the $\mathcal{U}_k$. Let $\undertilde{R}$ be a $\forcing$-name such that $\forcing \Vdash ``\undertilde{R}$ has the Laver property". Then $\forcing \ast \undertilde{R}\Vdash ``E$ cannot be extended to a $Q$-point".
\end{prp}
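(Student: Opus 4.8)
The plan is to transport the question to the intermediate model $\mathbf{V}[G]$, $G$ a $\forcing$-generic: using the Laver property of $\undertilde{R}$ one turns a hypothetical $Q$-point extending $E$ into a concrete obstruction lying in $\mathbf{V}[G]$, and one then destroys that obstruction --- unless $E$ is already isomorphic to one of the $\mathcal{U}_k$ --- by running the Shelah--Spinas game $G(\bar{\mathcal{U}})$ together with the pure decision property, along the lines of the proof of Lemma~\ref{lem:laver}.

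For the reduction, work in $\mathbf{V}[G]$, let $\eta=\{e_0<e_1<\cdots\}$ be the generic real, and fix the interval partition $\bar I=\langle I_j:j\in\omega\rangle$ with $I_j:=[e_j,e_{j+1})$, so that $\eta_k\cap I_j=\{e_j\}$ when $j\equiv k\bmod n$ and $\eta_k\cap I_j=\emptyset$ otherwise. Assume for contradiction that some condition of $\forcing\ast\undertilde{R}$ forces ``$\undertilde{E}$ is a $Q$-point with $E\subseteq\undertilde{E}$''; passing below it, $\undertilde{R}$ still has the Laver property over $\mathbf{V}[G]$ and $\undertilde{E}$ is forced to contain a selector $\undertilde{S}$ of $\bar I$, i.e.\ $|\undertilde{S}\cap I_j|\le1$ for all $j$. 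Coding $\undertilde{S}$ by the name $\undertilde{g}$ with $\undertilde{g}(j)$ the element of $\undertilde{S}\cap I_j$ when it exists and $\undertilde{g}(j):=\min I_j$ otherwise, we have $\undertilde{g}(j)\le\max I_j$ with $j\mapsto\max I_j$ in $\mathbf{V}[G]$; the Laver property then yields, below a further condition, a slalom $c\in\mathbf{V}[G]$ with $|c(j)|\le2^j$ and $\undertilde{g}(j)\in c(j)$ for all $j$. Set $T:=\bigcup_j\bigl(c(j)\cap I_j\bigr)\in\mathbf{V}[G]$. Then $\undertilde{R}$ forces $\undertilde{S}\subseteq T$, hence $T\in\undertilde{E}$, hence (since $E\subseteq\undertilde{E}$) that $T\cap A$ is infinite for every $A\in E$; being absolute with parameters $T,E\in\mathbf{V}[G]$, this already holds in $\mathbf{V}[G]$. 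Thus $\mathbf{V}[G]$ contains a set $T$ with $|T\cap I_j|\le2^j$ for all $j$ whose intersection with every member of $E$ is infinite.

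It therefore suffices to show that $\forcing$ forces there is no such $T$ when $E$ is not isomorphic to any $\mathcal{U}_k$. Suppose $E\not\cong\mathcal{U}_k$ for all $k$ and $\langle s,\bar X\rangle\Vdash_{\forcing}$``$|\undertilde{T}\cap\undertilde{I}_j|\le2^j$ for all $j$, and $\undertilde{T}\cap A$ is infinite for every $A\in E$''. Run $G(\bar{\mathcal{U}})$ with the following Maiden strategy: at stage $i$, given $\bar Y_0\geq_{\bar{\mathcal{U}}}\cdots\geq_{\bar{\mathcal{U}}}\bar Y_{i-1}$ and Death's answers $n_0<\cdots<n_{i-1}$, the Maiden uses the pure decision property to pass to some $\bar Y_i\leq_{\bar{\mathcal{U}}}\bar Y_{i-1}\setminus n_{i-1}^{+}$ deciding, for each $t\subseteq\{n_0,\dots,n_{i-1}\}$ (finitely many), the trace of $\undertilde{T}$ on those intervals $\undertilde{I}_m$ contained in $n_{i-1}^{+}$ under the hypothesis $\eta\cap n_{i-1}^{+}=s\cup t$ --- a finite decision, exactly as in the proof of Lemma~\ref{lem:laver} --- and then plays $\bar Y_i$. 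By the Shelah--Spinas lemma this is not a winning strategy, so Death wins: $\bar Y(k):=\{n_i:i\equiv k\bmod n\}\in\mathcal{U}_k$ for every $k$, and for $\bar Y$ the component-wise intersection of the $\bar Y_i$, the condition $\langle s,\bar Y\rangle$ decides $\undertilde{T}$ along the interval partition generated by Death's play. The two forced properties of $\undertilde{T}$ should then force it to concentrate, modulo a set disjoint from some member of $E$, on a single residue class $k^{\ast}\bmod n$ and to run along a subset of $\eta_{k^{\ast}}$ that belongs to $\mathcal{U}_{k^{\ast}}$; the finite-to-one map $m\mapsto j$ with $m\in\undertilde{I}_j$, evaluated at $\langle s,\bar Y\rangle$ and back in $\mathbf{V}$, then carries $\mathcal{U}_{k^{\ast}}$ to $E$, and since $\mathcal{U}_{k^{\ast}}$ is Ramsey and hence Rudin--Keisler minimal, this map is one-to-one on a member of $\mathcal{U}_{k^{\ast}}$, so $E\cong\mathcal{U}_{k^{\ast}}$ --- the desired contradiction.

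The main obstacle is exactly this last extraction: turning ``$T$ is $E$-positive and meets the slalom bound'' into a Rudin--Keisler reduction of $E$ onto one of the $\mathcal{U}_k$. This is the real content of the generalization of the corresponding lemma of \cite{halbeisen2025there} from $n=1$ to arbitrary $n$. For $n=1$ Death's winning play directly yields a subset of $\eta$ to which $\undertilde{T}$ is pinned, and the reduction to $\mathcal{U}$ is immediate; for $n\geq2$ one must additionally rule out that an $E$-positive slalom-thin set spreads across the $n$ residue classes of $\eta$, and this is precisely where the pairwise non-isomorphism of the $\mathcal{U}_k$ and the fact that the Shelah--Spinas lemma is about the whole tuple $\bar{\mathcal{U}}$ (not just its single components) are used. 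The bookkeeping making the Maiden's pure-decision moves simultaneously track the generic interval partition and decide $\undertilde{T}$ on it is modeled on the proof of Lemma~\ref{lem:laver} and is routine, but must be arranged with care.
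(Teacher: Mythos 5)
Your reduction is fine and matches the paper's first step: using the Laver property of $\undertilde{R}$ over the intermediate model to replace a hypothetical selector of the $\eta$-interval partition by a set $T$ (the paper's name $\undertilde{C}$) with $|T\cap I_j|\leq 2^j$ that must be compatible with $E$. The problem is everything after that. Your final paragraph asserts that Death's win in $G(\bar{\mathcal{U}})$ together with the two forced properties of $\undertilde{T}$ makes $T$ ``concentrate on a single residue class and run along a subset of $\eta_{k^{\ast}}$ that belongs to $\mathcal{U}_{k^{\ast}}$,'' from which you read off $E\cong\mathcal{U}_{k^{\ast}}$. Nothing you have set up yields this: the slalom bound allows up to $2^j$ points in the $j$'th interval that have nothing to do with $\eta$, and a condition $\langle s,\bar Y\rangle$ obtained from the game does not ``decide $\undertilde{T}$ along the interval partition'' -- it only guarantees that finite extensions decide initial segments. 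You yourself flag this extraction as ``the main obstacle,'' but that obstacle is precisely the content of the proof, and it is left unproved; as stated, the claimed concentration of $T$ on $\eta_{k^{\ast}}$ is simply false in general.

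What the paper actually does at this point is different in structure and cannot be skipped. First, a deciding lemma (Lemma~\ref{lem:deciding}) is proved via the game, including a coherence clause saying that the decided values $C_{t\cup\{x_0\}}$ and $C_{t\cup\{x_1\}}$ agree below the next point of $\bar X^{\ast}$; second, and crucially, a countable elementary submodel is used to build an interval partition $\{[k_i,k_{i+1})\}$ \emph{in the ground model} $\mathbf{V}$ such that the condition forces $\undertilde{C}$ to meet an interval only if some $\eta_j$ meets an adjacent interval (Lemma~\ref{lem:close}); third, the non-isomorphism of the $Q$-point $E$ with each Ramsey $\mathcal{U}_k$ is used \emph{contrapositively}: for this ground-model partition one finds $v\in E$ and $\bar Y\in\bar{\mathcal{U}}$ whose traces avoid adjacent intervals, and since $\eta_k\subseteq^{\ast}Y(k)$, the selector's range is forced almost disjoint from $v$. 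Note that this separation argument must be run in $\mathbf{V}$, where $E$ and the $\mathcal{U}_k$ are still ultrafilters; your interval partition $\bar I$ is a generic object, and ``evaluating at $\langle s,\bar Y\rangle$ and back in $\mathbf{V}$'' does not make it one, since no single condition decides $\eta$. So the finite-to-one map $m\mapsto j$ you want to push $\mathcal{U}_{k^{\ast}}$ along is not available in $\mathbf{V}$, and the Rudin--Keisler/near-coherence conclusion does not follow. To repair the proof you would need analogues of Lemmas~\ref{lem:deciding} and~\ref{lem:close} (the coherence clause plus the elementary-submodel construction of a $\mathbf{V}$-partition), which is exactly the non-routine part your sketch omits.
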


Proposition~\ref{prop:destruction} is proven through two lemmata. The first is analogous to Lemma~2.4 and Lemma~2.5 in \cite{halbeisen2025there}.

\begin{lem}\label{lem:deciding}
	Let $\langle s, \bar X\rangle \in \forcing$ and let $\undertilde{C}$ be an $\forcing$-name for a subset of $\omega$. There exists $\langle s, \bar X^{\ast}\rangle \leq_{\forcing} \langle s, \bar X\rangle$ such that, for every $t \in \fin(\bigcup_{k \in n}\bar X^{\ast}(k))$, there is some $C_t \in \fin(\omega)$ with
	\[\langle s \cup t, \bar X^{\ast} \setminus (\max t)^{+}\rangle \Vdash_{\forcing} \undertilde{C}\cap (\max t)^{+}=C_t.\]
	Furthermore, for any $m \in \bigcup_{k \in n} \bar X^{\ast}(k) \setminus (\max t)^{+}$ and all $x_{\varepsilon} > m$, $\epsilon \in \{0,1\}$, such that $\langle s \cup t \cup \{x_{\varepsilon}\}, \bar X^{\ast} \setminus x_{\varepsilon}\rangle \leq_{\forcing} \langle s \cup t, \bar X^{\ast}\rangle$, we have $C_{t \cup \{x_0\}} \cap m^{+}= C_{t \cup \{x_1\}} \cap m^{+}$.
\end{lem}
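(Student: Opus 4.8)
The plan is to prove this by a fusion argument organized around the game $G(\bar{\mathcal{U}})$. Since a $\leq_{\bar{\mathcal{U}}}$-decreasing $\omega$-sequence need not have a lower bound in $\prod_{k\in n}\mathcal{U}_k$ (a countable intersection of ultrafilter sets may be empty), a naive fusion fails, and the game is exactly the device repairing this: rather than intersecting, I would specify a strategy $\sigma$ for the Maiden in $G(\bar{\mathcal{U}})$, use that she has no winning strategy, fix a $\sigma$-play won by Death, and read off $\bar X^{\ast}$ from it.

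Here is the strategy. In round $0$ the Maiden plays $A_0:=\bar X$. In round $r\geq 1$, given Death's moves $d_0<\dots<d_{r-1}$ and the Maiden's previous moves $\bar X=A_0\geq_{\bar{\mathcal{U}}}\dots\geq_{\bar{\mathcal{U}}}A_{r-1}$, she sets $B:=A_{r-1}$ and runs through all $t\subseteq\{d_0,\dots,d_{r-1}\}$ and all $\ell\leq d_{r-1}$, each time applying the pure decision property to $\langle s\cup t,B\rangle$ and the sentence ``$\ell\in\undertilde{C}$'' to shrink $B$ in $\leq_{\bar{\mathcal{U}}}$ so that $\langle s\cup t,B\rangle$ decides it; after these finitely many steps $\langle s\cup t,B\rangle$ decides $\undertilde{C}\cap d_{r-1}^{+}$ for every such $t$. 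She then plays $A_r:=\langle B(k)\setminus d_{r-1}^{+}:k\in n\rangle$, which is again a member of $\prod_{k\in n}\mathcal{U}_k$ and has $\min A_r(k)>d_{r-1}$, forcing $d_r>d_{r-1}$.

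Since $\sigma$ is not winning for the Maiden, fix a $\sigma$-play in which Death wins: $\bar X^{\ast}(k):=\{d_r:r\equiv k\bmod n\}\in\mathcal{U}_k$ for all $k\in n$. As $d_r\in A_r(r\bmod n)\subseteq\bar X(r\bmod n)$, the pair $\langle s,\bar X^{\ast}\rangle$ is a condition with $\langle s,\bar X^{\ast}\rangle\leq_{\forcing}\langle s,\bar X\rangle$, and $\langle d_r:r\in\omega\rangle$ enumerates $\bigcup_{k\in n}\bar X^{\ast}(k)$ strictly increasingly. For the first clause, let $t\in\fin(\bigcup_{k\in n}\bar X^{\ast}(k))$ with $\max t=d_{j_0}$ (so $t\subseteq\{d_0,\dots,d_{j_0}\}$); at round $j_0+1$ the strategy arranged $\langle s\cup t,A_{j_0+1}\rangle\Vdash\undertilde{C}\cap d_{j_0}^{+}=C_t$ for some $C_t\in\fin(\omega)$, and since $d_{r'}\in A_{r'}(r'\bmod n)\subseteq A_{j_0+1}(r'\bmod n)$ whenever $r'>j_0$, the component $\bar X^{\ast}(k)\setminus(\max t)^{+}=\{d_{r'}:r'\equiv k,\ r'>j_0\}$ is contained in $A_{j_0+1}(k)$; hence $\langle s\cup t,\bar X^{\ast}\setminus(\max t)^{+}\rangle\leq_{\forcing}\langle s\cup t,A_{j_0+1}\rangle$, which yields the first clause. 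For the ``furthermore'' clause, put $m=d_{j_1}$ with $j_1>j_0$ and let $x_0,x_1>m$ be as in the statement; each $x_{\epsilon}$ then lies in the $(|s\cup t|\bmod n)$-th block $\bar X^{\ast}(|s\cup t|\bmod n)$ (this being exactly the requirement for $\langle s\cup t\cup\{x_{\epsilon}\},\bar X^{\ast}\setminus x_{\epsilon}^{+}\rangle$ to extend the canonical stem-$(s\cup t)$ condition), say $x_{\epsilon}=d_{j^{\epsilon}}$, and $j^{\epsilon}>j_1$ since $d_{j^{\epsilon}}>m=d_{j_1}$. Applying the first clause to $t\cup\{x_{\epsilon}\}$ gives $\langle s\cup t\cup\{x_{\epsilon}\},\bar X^{\ast}\setminus x_{\epsilon}^{+}\rangle\Vdash\undertilde{C}\cap x_{\epsilon}^{+}=C_{t\cup\{x_{\epsilon}\}}$, hence $\Vdash\undertilde{C}\cap m^{+}=C_{t\cup\{x_{\epsilon}\}}\cap m^{+}$. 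On the other hand, at round $j_1+1$ the strategy arranged $\langle s\cup t,A_{j_1+1}\rangle\Vdash\undertilde{C}\cap m^{+}=C'$ for a fixed $C'$, and, arguing as above but now also checking $x_{\epsilon}\in A_{j_1+1}(|s\cup t|\bmod n)$ (which holds because $j^{\epsilon}>j_1$), one gets $\langle s\cup t\cup\{x_{\epsilon}\},\bar X^{\ast}\setminus x_{\epsilon}^{+}\rangle\leq_{\forcing}\langle s\cup t,A_{j_1+1}\rangle$. Therefore $C_{t\cup\{x_{\epsilon}\}}\cap m^{+}=C'$ for both $\epsilon\in\{0,1\}$, which is the claim.

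The routine parts are the repeated verifications that the displayed pairs are genuine $\forcing$-conditions and that the stated instances of $\leq_{\forcing}$ hold (all three clauses of the order being immediate given that $\bigcup_{k\in n}\bar X^{\ast}(k)$ is enumerated increasingly by Death's moves). The one point that needs care is ensuring that a decision made at a finite stage persists into $\bar X^{\ast}$: the tail $\bar X^{\ast}\setminus m^{+}$ is precisely the re-blocked tail $\{d_{r'}:r'>j_1\}$ of the sequence of Death's moves, and the strategy must have shrunk the relevant component sets already at round $j_1+1$, before $d_{j_1+1},d_{j_1+2},\dots$ were revealed; what makes this work is the invariant $A_{r'}\leq_{\bar{\mathcal{U}}}A_{j_1+1}$ for all $r'>j_1$, built into $\sigma$. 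The main conceptual obstacle is recognizing that the game is what converts the informal ``diagonal intersection'' into a legitimate pure extension, and arranging the per-round tasks so that the coherence of the values $C_t$ below $m^{+}$ is already forced by $\langle s\cup t,A_{j_1+1}\rangle$ rather than only by the longer-stem conditions.
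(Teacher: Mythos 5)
Your proof is correct, but it reaches the ``furthermore'' clause by a genuinely different mechanism than the paper. The paper proceeds in two stages: it first obtains $\bar X'$ satisfying the first clause exactly as in the Laver-property argument (deciding $\undertilde{C}\cap(\max t)^{+}$ at conditions whose stems already contain the candidate next point), and then plays the game a second time, partitioning $\bar Y_i'(l_t)\setminus n_i^{+}$ into the finitely many sets $P_{t,d}=\{x: C_{t\cup\{x\}}\cap n_i^{+}=d\}$ and using the ultrafilter $\mathcal{U}_{l_t}$ to select one piece $P_{t,d_t}$, so that the values $C_{t\cup\{x\}}\cap n_i^{+}$ are synchronized across admissible $x$ by a pigeonhole/Ramsey step. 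You instead make a single pass in which, at round $r$, the statement ``$\ell\in\undertilde{C}$'' is decided for all $\ell\leq d_{r-1}$ \emph{purely}, i.e.\ at the condition with stem $s\cup t$ and no new point added; then any two admissible one-point extensions $\langle s\cup t\cup\{x_\epsilon\},\bar X^{\ast}\setminus x_\epsilon^{+}\rangle$ refine the single condition $\langle s\cup t, A_{j_1+1}\rangle$, which has already decided $\undertilde{C}\cap m^{+}$, so their agreement below $m^{+}$ is automatic and no second game or partition argument is needed. Your route is shorter and yields the slightly stronger fact that $C_{t\cup\{x\}}\cap m^{+}$ equals a purely decided set independent of $x$; the paper's route stays closer to the $n=1$ argument of \cite{halbeisen2025there} and isolates the combinatorial (partition) content in a separate step. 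Your verifications of the ordering facts, in particular that the admissibility hypothesis forces $x_\epsilon\in\bar X^{\ast}(|s\cup t|\bmod n)$ and hence $x_\epsilon\in A_{j_1+1}(|s\cup t|\bmod n)$, are the right ones and are carried out correctly.

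One small imprecision: in round $r$, when $t$ contains $d_{r-1}$, the pair $\langle s\cup t, B\rangle$ with $B=A_{r-1}$ (or a partial shrink of it) need not be a condition, since $B$ may contain elements $\leq\max t$. You should apply pure decision to $\langle s\cup t, B\setminus(\max(s\cup t))^{+}\rangle$ (equivalently, start the round from $A_{r-1}\setminus d_{r-1}^{+}$); since the Maiden plays $A_r:=B\setminus d_{r-1}^{+}$ anyway, nothing else in your argument changes.
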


\begin{proof}
We find an $\bar X' \leq_{\bar{\mathcal{U}}}\bar X$ that satisfies the first part of the lemma as in the proof of Lemma~\ref{lem:laver}. To make the second part of the lemma work, we again play the game $G(\bar{\mathcal{U}})$. The Maiden starts by playing $\bar Y_0':= \bar X'$. Assume that $\bar Y_0' \geq_{\bar{\mathcal{U}}} \bar Y_1' \geq_{\bar{\mathcal{U}}} ... \geq_{\bar{\mathcal{U}}} \bar Y_i'$ and $n_0' < n_1' < ... < n_i'$ have been played. For each $t \subseteq \{n_j: j \leq i\}$, let $l_t := |s \cup t|_{\text{mod}\;n}$ and define for each $d \subseteq n_i^{+}$ the set
\[P_{t,d}:=\{x \in \bar Y_i'(l_t) \setminus n_i^{+}: C_{t \cup \{x\}} \cap n_i^{+}=d\}.\]

Note that for every $t \subseteq \{n_0, n_1, ..., n_i\}$, the set $\{P_{t,d}: d \subseteq n_i^{+}\}$ is a partition of $\bar Y_i'(l_t) \setminus n_i^{+}$ into finitely many pieces. Hence, there exists one $d_t\subseteq n_i^{+}$ such that $P_{t,d_t}\in \mathcal{U}_{l_t}$. Let
\[\bar Y'_{i+1}(k):= \bigcap_{\substack{{t \subseteq \{n_j: j \leq i\}}\\{l_t=k}}} P_{t,d_t}.\]
If there is no $t \subseteq \{n_j: j \leq i\}$ with $l_t=k$, let $Y_{i+1}'(k):= \bar Y_{i}'(k)$.

Again Death wins and the sequence $\bar X^{\ast}$ with $\bar X^{\ast}(k):=\{n_i: i \equiv k \mod n\}$ works.
\end{proof}

The preceding lemma implies the following analog of Lemma~2.7 in \cite{halbeisen2025there}.

\begin{lem}\label{lem:close}
	Let $\undertilde{C}$ be a $\forcing$-name for a subset of $\omega$ and let $\langle s, \bar X\rangle \in \forcing$ be such that $\langle s, \bar X\rangle \Vdash_{\forcing} \forall i \in \omega: |\undertilde{C} \cap {\underaccent{\dot}{\eta}(i)}^{+}|\leq 2^{i}$.
	
There exists an interval partition $\{[k_i, k_{i+1}): i \in \omega\}$ of $\omega$ and a strengthening of $\langle s, \bar X\rangle$ which forces that
		\[\forall i \in \omega\setminus \{0,1\}: \undertilde{C} \setminus (\max s)^{+}\cap [k_{i-1}, k_{i})\neq \emptyset \implies \exists j \in n:
	\begin{cases}
	\text{range}(\underaccent{\dot}{\eta}_j)\cap [k_{i-2}, k_{i-1})\neq \emptyset, \text{ or}	\\
	\text{range}(\underaccent{\dot}{\eta}_j)\cap [k_{i-1}, k_i)\neq \emptyset, \text{ or}	\\
	\text{range}(\underaccent{\dot}{\eta}_j)\cap [k_{i}, k_{i+1})\neq \emptyset.
	\end{cases}
\]
\end{lem}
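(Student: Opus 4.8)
The plan is to use Lemma~\ref{lem:deciding} to turn the name $\undertilde{C}$ into something combinatorially rigid, and then build the interval partition $\{[k_i,k_{i+1}):i\in\omega\}$ by a fusion-style recursion along the game $G(\bar{\mathcal{U}})$ so that the ``new'' part of $\undertilde{C}$ inside one block is forced to be determined by generic information from three consecutive blocks. First I would apply Lemma~\ref{lem:deciding} to $\langle s,\bar X\rangle$ and $\undertilde{C}$ to obtain $\langle s,\bar X^{\ast}\rangle$ with its two properties: finite initial segments of $\undertilde{C}$ are decided by $\langle s\cup t,\bar X^{\ast}\setminus(\max t)^{+}\rangle$ via the sets $C_t$, and the coherence property $C_{t\cup\{x_0\}}\cap m^{+}=C_{t\cup\{x_1\}}\cap m^{+}$ holds whenever $x_0,x_1>m$ are legal extensions past a point $m\in\bigcup_k\bar X^{\ast}(k)$. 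The key consequence of coherence together with the size bound $|\undertilde{C}\cap\underaccent{\dot}{\eta}(i)^{+}|\le 2^i$ is that, below $\langle s,\bar X^{\ast}\rangle$, the value of $C_t\cap m^{+}$ for $m$ below the least element of $\bar X^{\ast}$ past $\max t$ depends only on $t$ through its intersection with the interval of recently-added generic elements; this is what lets a bounded amount of generic data localize $\undertilde{C}$.

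Next I would define the $k_i$ recursively while running the Maiden's non-winning strategy in $G(\bar{\mathcal{U}})$. Set $k_0=(\max s)^{+}$. Having fixed $k_0<\dots<k_i$ and a condition $\langle s,\bar Y_i\rangle\le_{\forcing}\langle s,\bar X^{\ast}\rangle$, I would use the decision property to compute, for every finite $t\subseteq\bigcup_k\bar Y_i(k)$ with $t\subseteq k_i$, the finitely many possible values of $C_t\cap k_i$ together with how they are pinned down by $t\cap[k_{i-2},k_i)$ (for $i\ge 2$), thin $\bar Y_i$ componentwise inside $[k_i,\infty)$ accordingly (one intersection over finitely many $t$, staying in each $\mathcal{U}_k$), and then let $k_{i+1}$ be large enough that all these finitely many thinned-down components meet $[k_i,k_{i+1})$ — so that the generic $\underaccent{\dot}{\eta}$ is forced to enter $[k_i,k_{i+1})$ in every coordinate, and moreover that $k_{i+1}$ bounds $\undertilde{g}$-style data needed at the next step. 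Running this as the Maiden's strategy, Death wins, and $\bar X^{\dagger}(k):=\{n_i:i\equiv k\bmod n\}\in\mathcal{U}_k$; the condition $\langle s,\bar X^{\dagger}\rangle$ is the desired strengthening. Finally I would check that $\langle s,\bar X^{\dagger}\rangle$ forces the displayed implication: if some $m\in\undertilde{C}\setminus(\max s)^{+}$ lies in $[k_{i-1},k_i)$ but $\underaccent{\dot}{\eta}$ avoids $[k_{i-2},k_{i-1})\cup[k_{i-1},k_i)\cup[k_i,k_{i+1})$ entirely, then by the localization property above $C_t\cap k_i$ (for $t$ the initial segment of $\underaccent{\dot}{\eta}$ below $k_{i+1}$) would have to agree with the value computed from $t\cap[k_{i-2},k_i)=\emptyset$, forcing $m\notin\undertilde{C}$, a contradiction; the two ``or'' disjuncts flanking $[k_{i-1},k_i)$ are exactly the slack needed because $C_t$ is only determined once the generic has moved strictly past the relevant point.

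The main obstacle I expect is bookkeeping the dependence of $C_t\cap k_i$ on $t$: I need the block $[k_i,k_{i+1})$ chosen at stage $i$ to be simultaneously large enough to capture all relevant componentwise generic entries and to bound the finitely many values $C_t$ coming from the decision property, while the values of those $C_t$ themselves depend on $\bar Y_i$, which is only finalized via the thinning at that same stage — so the recursion must interleave ``decide and thin'' with ``choose $k_{i+1}$'' in the right order. The coherence clause of Lemma~\ref{lem:deciding} handles the case $\epsilon\in\{0,1\}$, i.e.\ two possible next generic elements in the \emph{same} block, but I must iterate it to compare arbitrarily many legal extensions within a block; this is routine but needs care, and it is the reason three consecutive blocks (rather than one) appear in the conclusion. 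As in \cite{halbeisen2025there}, once the partition and the condition are in place the verification that the implication is forced is a short unwinding of the definitions.
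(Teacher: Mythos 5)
There is a genuine gap, and it sits exactly at the core of the argument. Two claims your plan leans on are not available. First, no condition in $\forcing$ can force the generic to enter a prescribed ground-model interval beyond its stem: given any $\langle s,\bar Y\rangle$, one may always extend the stem by an element of $\bar Y$ past $k_{i+1}$, skipping $[k_i,k_{i+1})$ entirely, so arranging that the thinned components of $\bar Y_i$ meet $[k_i,k_{i+1})$ does not make $\underaccent{\dot}{\eta}$ ``forced to enter $[k_i,k_{i+1})$ in every coordinate''. Second, and more importantly, the ``localization property'' you extract from Lemma~\ref{lem:deciding} is stronger than what that lemma gives: the coherence clause only says that $C_{t\cup\{x\}}\cap m^{+}$ is independent of the choice of the \emph{single} next element $x>m$; it says nothing about $C_t\cap m^{+}$ depending on $t$ only through its recent part, and in general it does not (the name $\undertilde{C}$ may depend on the whole stem). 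Consequently your verification step --- ``if the generic avoids the three neighbouring intervals, then $C_t\cap k_i$ agrees with the value computed from empty recent data, so $m\notin\undertilde{C}$'' --- is precisely the statement that needs proof, and nothing in your sketch supplies a reason why the ``default'' value should avoid $[k_{i-1},k_i)$. Note also that the hypothesis $|\undertilde{C}\cap\underaccent{\dot}{\eta}(i)^{+}|\leq 2^{i}$ plays no visible role in your argument, whereas it is essential.

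The paper closes exactly this gap by a different mechanism: the $k_i$ are not built by a fusion along the game but as the traces $N_i\cap\omega$ of a chain of finite approximations to a countable elementary submodel with a witness-closure property, and the stem $s$ together with $\bar X$ is first thinned so that the $j$'th element of $\{\max s\}\cup\bigcup_j\bar X(j)$ exceeds $2^{j+1}$, which makes the cardinality $\gamma=|C_{t'\cup\{l^{\ast}\}}\cap(l_0,m^{\ast})|\leq 2^{M}$ available as a parameter in the relevant $N_{i^{\ast}}$. If a condition forces $a\in\undertilde{C}$ while its stem elements avoid the three neighbouring intervals, elementarity produces a witness $\langle m^{\dagger},l^{\dagger},D^{\dagger}\rangle$ below $k_{i^{\ast}+1}$ to the statement ``some legal next element $l$ decides a set of size $\gamma$ on $(l_0,m)$''; the coherence clause of Lemma~\ref{lem:deciding} (applied at $m^{\dagger}$, which lies \emph{below} both candidate next elements --- one cannot compare at any point above $a$) gives $D^{\dagger}\subseteq D^{\ast}$, and the cardinality match $|D^{\dagger}|=|D^{\ast}|=\gamma$ upgrades this to $D^{\dagger}=D^{\ast}$, contradicting $a\in D^{\ast}$ and forcing $l^{\dagger}\geq a$, which is impossible since $l^{\dagger}<k_{i-1}\leq a$. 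Your plan is missing this reflection-plus-cardinality step (or any substitute for it); iterating the coherence clause within a block, as you propose, cannot reach it, because coherence never controls $\undertilde{C}$ at or above the comparison point $m$.
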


\begin{proof}
Up to one minor modification, the proof is the same as the proof of Lemma~2.7 in \cite{halbeisen2025there}, which is why we will be brief. We first thin out $\bar X$ and extend $s$ by one element such that the $j$'th element of $\{\max s\} \cup \bigcup_{j \in n} \bar X(j)$ in increasing order is greater than $2^{j+1}$. Then, we apply Lemma~\ref{lem:deciding}, strengthening  $\langle s, \bar X\rangle$ to $\langle s, \bar X^{\ast}\rangle$.

To obtain the interval partition, we let $N$ be a countable elementary submodel of some large enough $\mathcal{H}_{\chi}$ such that $N$ contains all the relevant parameters. By induction, we construct a sequence $N_0 \subseteq N_1 \subseteq ... $ of finite subsets of $N$ with $\bigcup_{i \in \omega}N_i =N$, such that all the relevant parameters are in $N_0$, such that the $N_i$ are sufficiently closed, with $\forall i \in \omega: k_i:=N_i \cap \omega \in \omega$, and such that every formula $\phi(x, a_0, ...,a_l)$ of length less than $8400$ with parameters $a_j \in N_i$ and $N \models \exists x \phi(x, a_0, ...,a_l)$ has a witness in $N_{i+1}$.

To show that the interval partition given by the $k_i$ works, assume that $\langle s \cup t, \bar X'\rangle\geq_{\forcing} \langle s, \bar X^{\ast}\rangle$ and $a \in \omega \setminus (\max s)^{+}$ are such that $\langle s \cup t, \bar X'\rangle \Vdash_{\forcing} a \in \undertilde{C}$. By Lemma~\ref{lem:deciding} and by possibly extending $t$, we may assume without loss of generality that $\bar X'=\bar X^{\ast} \setminus (l^{\ast})^{+}$ and $l_0 < a \leq l^{\ast}$, where $l_0$ and $l^{\ast}$ are the second largest and the largest element of $t$, respectively.

If there is no element of $\bigcup_{j \in n}\bar X^{\ast}(j)$ between $a$ and $l^{\ast}$, we are done, since then $l^{\ast}$ is definable from $a$ by the formula $l^{\ast}=\min(\bigcup_{j \in n} \bar X^{\ast}(j) \setminus a)$. Hence we may assume that there is such an element $m^{\ast}$, i.e., $l_0 < a < m^{*} < l^{*}$.

We let $t':=t \setminus \{l^{\ast}\}$. It remains to show that if $t'$ and hence $l_0$ are in $N_{i^{\ast}}$, then some number greater than $a$ is in $N_{i^{\ast}+1}$. 

We let $M \in \omega$ be such that $l^{*}$ is the $M$'th element of $s \cup t$ in increasing order and define $n^{\ast}:=M_{\text{mod}\;n}$, i.e., $l^{\ast} \in \bar X^{\ast}(n^{\ast})$. Furthermore, we set $D^{*}:= C_{t' \cup \{l^{*}\}} \cap (l_0, m^{*})$. Note that $a \in D^{*}$ and that $|D^{*}|=:\gamma \leq 2^{M}$. By the initial thinning of $\bar X$, the parameter $\gamma$ is in $N_{i^{\ast}}$, and hence $N_{i^{\ast}+1}$ contains a witness $\langle m^{\dagger}, l^{\dagger}, D^{\dagger}\rangle$ to the formula

\[\exists \langle m,l, D\rangle: \begin{cases}
 	 m \in \bigcup_{j \in n}\bar X(j) \setminus {l_0}^{+},\; m<l, \;l \in \bar X(n^{\ast}) \setminus {l_0}^{+}\text{ and} \\
 	  D \subseteq (l_0, m), \text{ and } \\
 	  |D|=\gamma, \text{ and } \\
 	  \langle s \cup t' \cup \{l\}, \bar X \setminus l^{+}\rangle \Vdash_{\forcing} \undertilde{C} \cap (l_0, m)=D.
 \end{cases}
 \]

Now, $l^{\dagger}$ is the required number greater than $a$: If we had $m^{\dagger}< l^{\dagger} < a < m^{\ast}$, the two conditions $\langle s \cup t' \cup \{l^{\ast}\}, \bar X^{\ast} \setminus (l^{\ast})^{+}\rangle$ and $\langle s \cup t' \cup \{l^{\dagger}\}, \bar X^{\ast} \setminus (l^{\dagger})^{+}\rangle$ would agree on $\undertilde{C} \cap (l_0, m^{\dagger})$, by Lemma~\ref{lem:deciding}. Hence we would have $D^{\dagger} \subseteq D^{\ast}$, which in turn implies $D^{\dagger} = D^{\ast}$ because both have size $\gamma$. This would contradict $a \in D^{\ast}$.
\end{proof}

\begin{proof}[Proof of Proposition~\ref{prop:destruction}]
Assume that $E$ be a $Q$-point and $\bar{\mathcal{U}}=\langle \mathcal{U}_k: k \in n\rangle$ a sequence of $n$ pairwise non-isomorphic Ramsey ultrafilters such that $E$ is not isomorphic to any of the $\mathcal{U}_k$. Let $\undertilde{a}$ be a $\forcing \ast \undertilde{R}$-name for a strictly increasing element of ${^\omega}\omega$ such that
\[\forcing \ast \undertilde{R} \Vdash \forall i \in \omega: \undertilde{a}(i) \in (\underaccent{\dot}{\eta}(i-1), \underaccent{\dot}{\eta}(i)].\]

Let $\langle p, \undertilde{q}\rangle \in \forcing \ast \undertilde{R}$. By the Laver property of $\undertilde{R}$, there exists a $\forcing$-name $\undertilde{C}$ for a subset of $\omega$ and some $\langle p', \undertilde{q}'\rangle \leq_{\forcing \ast \undertilde{R}} \langle p, \undertilde{q}\rangle $ such that
\[\langle p', \undertilde{q}'\rangle \Vdash_{\forcing \ast \undertilde{R}} \forall i \in \omega: \undertilde{a}(i) \in \undertilde{C}\cap (\underaccent{\dot}{\eta}(i-1), \underaccent{\dot}{\eta}(i)] \text{ and } | \undertilde{C}\cap {\underaccent{\dot}{\eta}(i)}^{+} |\leq 2^i.\]

By Lemma~\ref{lem:close}, we find some $p^{\ast} \leq_{\forcing}p'$ and an interval partition $\{[k_i, k_{i+1}): i \in \omega\}$ of $\omega$ such that $p^{\ast}$ forces that eventually, $\undertilde{C}$ intersects some interval only if one of the reals $\eta_{k}$ for $k \in n$ intersects an adjacent interval. Hence, $\langle p^{\ast}, \undertilde{q}'\rangle$ forces the same for $\text{range}(\undertilde{a})$ in place of $\undertilde{C}$.

However, since $E$ and the $\mathcal{U}_k$ are non-isomorphic, it follows by a standard argument (e.g., see \cite[Lemma~2.8]{halbeisen2025there}) that there is $\bar Y \in \bar{\mathcal{U}}$ and $v \in E$ such that the $Y(k)$ and $v$ do not intersect adjacent intervals of the partition $\{[k_i, k_{i+1}): i \in \omega\}$. Since each $\eta_k$ is almost contained in $Y(k)$, $\undertilde{a}$ is thus forced to be almost disjoint from $v$.
\end{proof}

\section{Exactly Two $Q$-Points via Matet-Mathias Forcing}\label{section3}

In this section, we present an alternative proof of the main theorem for the case $n=2$, using machinery related to Milliken's topological Ramsey space $(\mathrm{FIN}^{[\infty]}, \leq)$ (see \cite{milliken}).

Two ultrafilters $\mathcal{U}_0$ and $\mathcal{U}_1$ are called \textit{nearly coherent} if and only if there is some finite-to-one $f \in \func$ such that $f(\mathcal{U}_0)=f(\mathcal{U}_1)$, where $f(\mathcal{U}_i):=\{x \subseteq \omega: f^{-1}[x]\in \mathcal{U}_i\}$. Near-coherence is an equivalence relation (see \cite{blass86nearcoherence}), and its equivalence classes are called \textit{near-coherence classes}. Note that two $Q$-points are nearly-coherent if and only if they are isomorphic. Banakh and Blass~\cite{banakh2006number} have shown that the number of near-coherence classes is either finite or $2^{\mathfrak{c}}$, and that the latter holds in models with $\mathfrak{u} \geq \mathfrak{d}$. Mildenberger~\cite{mildenberger2024exactly} recently constructed models with exactly two and with exactly three near-coherence classes.

It is easy to see that Mildenberger's models contain exactly one and exactly two $Q$-points, respectively: They contain at least that many Ramsey ultrafilters by construction and cannot contain more, since in both models, one of the near-coherence classes must contain a $\mathfrak{u}< \mathfrak{d}$-generated ultrafilter, and a $Q$-point cannot be ${<}\mathfrak{d}$-generated.

Mildenberger's model with exactly three near-coherence classes is obtained via a length-$\omega_2$ countable support iteration of Matet forcing restricted to a suitable \emph{Matet-adequate family} $\mathcal{H}$. In this section, we show that replacing Matet forcing in Mildenberger's construction with its Mathias-style variant, \emph{Matet-Mathias forcing} $\mathbb{MM}(\mathcal{H})$, the resulting model contains $2^{\mathfrak{c}}$ near-coherence classes instead of three, but its number of $Q$-points remains two.

\begin{thm}\label{thm:matetmathias}
Work over a ground model $\mathbf{V}$ satisfying CH. Then there is a countable support iteration of proper iterands $\mathbb{P}_{\omega_2}=\langle \mathbb{P}_{\alpha}, \undertilde{\mathbb{MM}}(\undertilde{\mathcal{H}}_{\alpha}) : \alpha<\omega_2\rangle$, such that there are exactly two $Q$-points in the extension by $\mathbb{P}_{\omega_2}$. 
\end{thm}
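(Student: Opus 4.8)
The plan is to mirror the structure of Section~2, but with $\mathbb{MM}(\mathcal H)$ in place of $\forcing$. First I would set up the iteration itself: working over a ground model of CH, build a countable support iteration $\mathbb{P}_{\omega_2}=\langle\mathbb{P}_\alpha,\undertilde{\mathbb{MM}}(\undertilde{\mathcal H}_\alpha):\alpha<\omega_2\rangle$ where at each stage the $\mathbb{P}_\alpha$-name $\undertilde{\mathcal H}_\alpha$ is forced to be a Matet-adequate family that contains (suitable block-sequence images of) the two Ramsey ultrafilters being built, with the families increasing along the iteration; here the bookkeeping is exactly as in Mildenberger~\cite{mildenberger2024exactly}, and the only change is that the generic real added by $\mathbb{MM}(\mathcal H)$ is a block sequence rather than a Matet condition. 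One verifies that $\mathbb{MM}(\mathcal H)$ is proper (indeed satisfies the relevant fusion/pure-decision properties over a Matet-adequate family, as in the Milliken space $(\mathrm{FIN}^{[\infty]},\le)$) and has the Laver property, so that $\mathbb{P}_{\omega_2}$ is proper, $\omega_2$-c.c., and $\omega^\omega$-bounding-like enough that no new reals appear at stages of uncountable cofinality. Two specific $Q$-points $\mathcal U_0^{\omega_2},\mathcal U_1^{\omega_2}$ arise as the unions of the generated Ramsey ultrafilters along the iteration, so there are \emph{at least} two.

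Next I would show there are \emph{at most} two. As in the excerpt, it suffices to prove a destruction proposition: if $E$ is a $Q$-point not isomorphic to either $\mathcal U_0,\mathcal U_1$ in an intermediate model, and $\undertilde R$ is a $\mathbb{MM}(\mathcal H)$-name for a forcing with the Laver property, then $\mathbb{MM}(\mathcal H)\ast\undertilde R$ forces that $E$ cannot be extended to a $Q$-point. The reflection argument is identical to the one sketched before Proposition~\ref{prop:destruction}: a standard closure argument finds $\delta<\omega_2$ below which a hypothetical extra $Q$-point already lives, and one factors the tail as $\mathbb{MM}(\mathcal H)\ast\undertilde R$ with $\undertilde R$ Laver. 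So the whole theorem reduces to re-proving Lemmas~\ref{lem:deciding}, \ref{lem:close}, and Proposition~\ref{prop:destruction} with $\mathbb{MM}(\mathcal H)$ replacing $\forcing$.

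The translation of the combinatorial lemmas is where the real work lies, and it is the step I expect to be the main obstacle. In Section~2 the engine was the Shelah--Spinas game $G(\bar{\mathcal U})$, which gave both pure decision and the "fusion with finitely many side conditions" used in Lemma~\ref{lem:deciding}. For $\mathbb{MM}(\mathcal H)$ over a Matet-adequate family the analogous engine is the Ramsey property of Milliken's space together with the adequacy axioms on $\mathcal H$ (the relevant game/selectivity statement is exactly what Mildenberger isolates for Matet forcing, and it transfers to the Mathias-style variant since a single block of the Mathias generic plays the role of a single Matet condition). Using this I would prove: (i) a deciding lemma — given a name $\undertilde C$ for a subset of $\omega$ and a condition, there is a pure extension so that for every finite block-sequence extension $t$ of the stem, $\undertilde C\cap(\max t)^+$ is decided by $C_t$, with the same local coherence $C_{t\cup\{x_0\}}\cap m^+=C_{t\cup\{x_1\}}\cap m^+$ for blocks $x_0,x_1$ above $m$; and (ii) the interval-partition lemma — if the condition forces $|\undertilde C\cap\underaccent{\dot}\eta(i)^+|\le 2^i$ along the generic block sequence $\underaccent{\dot}\eta$, then after an initial thinning there is an interval partition $\{[k_i,k_{i+1})\}$ and a strengthening forcing that eventually $\undertilde C$ meets an interval only if one of the two "coordinate" reals derived from the generic block sequence meets an adjacent interval. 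The elementary-submodel/definability argument inside the proof of Lemma~\ref{lem:close} goes through verbatim once the deciding lemma is in place.

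Finally, the destruction proof itself is copied with cosmetic changes. Given a $\mathbb{MM}(\mathcal H)\ast\undertilde R$-name $\undertilde a$ for a strictly increasing function with $\undertilde a(i)\in(\underaccent{\dot}\eta(i-1),\underaccent{\dot}\eta(i)]$, the Laver property of $\undertilde R$ pulls $\undertilde a$ into a $\mathbb{MM}(\mathcal H)$-name $\undertilde C$ with $|\undertilde C\cap\underaccent{\dot}\eta(i)^+|\le 2^i$; Lemma~\ref{lem:close} produces the interval partition along which $\mathrm{range}(\undertilde a)$ is trapped; and since $E$ is not nearly coherent with (hence not isomorphic to) either $\mathcal U_0$ or $\mathcal U_1$, a standard argument (as in \cite[Lemma~2.8]{halbeisen2025there}) yields $\bar Y\in\bar{\mathcal U}$ and $v\in E$ whose pieces avoid adjacent intervals, so $\undertilde a$ — and hence any set in the would-be extension of $E$ containing $\mathrm{range}(\undertilde a)$ — is forced almost disjoint from $v$, contradicting the $Q$-point property of the extension. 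The one genuinely new verification compared to Section~2 is checking that the two "coordinate" ultrafilters extracted from the $\mathbb{MM}(\mathcal H)$-generic really are pairwise non-isomorphic Ramsey ultrafilters and that $E$ can be arranged non-isomorphic to both — this is exactly the role of Matet-adequacy of $\mathcal H$, and it is inherited from Mildenberger's analysis, with the caveat that now the family $\mathcal H$ is not diagonalized into a single ultrafilter, which is precisely why the number of near-coherence classes balloons to $2^{\mathfrak c}$ while the $Q$-point count stays at two.
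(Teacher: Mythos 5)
Your overall architecture is the paper's: the same iteration of $\mathbb{MM}(\mathrm{FIN}^{[\infty]}(\bar{\mathcal{U}}^{\alpha}))$, pure decision and a Laver-type property for $\mathbb{MM}(\mathcal{H})$ over a Matet-adequate family, reflection to an intermediate model, and a destruction proposition (Proposition~\ref{prop:destruction2}) proved by porting the deciding and interval-partition lemmas. But the two places you dismiss as routine transfers are exactly where the block-sequence structure forces new work, and your sketch has gaps there. First, your deciding lemma carries over only the Section~2 coherence clause ($C_{t\cup\{x_0\}}\cap m^+=C_{t\cup\{x_1\}}\cap m^+$ for new blocks above $m$). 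In $\mathbb{MM}(\mathcal{H})$ a stronger condition can also \emph{grow the last block of the stem}, so the paper's Lemma~\ref{lemma-deciding2} needs a second clause (item~(ii), comparing $C_{b^{+\smallfrown}t_0}$ and $C_{b^{+\smallfrown}t_1}$) about different completions of the final block; this is precisely what handles the case of an element of $\undertilde{C}$ forced into the \emph{interior} of a wide block in the interval-partition argument. Relatedly, your version of the interval-partition lemma --- that $\undertilde{C}$ meets an interval only if one of the two coordinate reals $\min[\eta],\max[\eta]$ meets an adjacent one --- is stronger than what the elementary-submodel argument yields: an interior element of a block only witnesses that $\bigcup\eta$ meets a nearby interval, which is how the paper states its analogue of Lemma~\ref{lem:close}. (A minor point in the same vein: the number of possible stems below a finite block sequence outgrows $2^i$, so the paper works with an $h$-Laver property for a specific $h$ and uses $h(i)$ as the bound in that lemma.)

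Second, and more seriously, the endgame cannot simply cite the Section~2 ``standard argument'' producing $\bar Y\in\bar{\mathcal{U}}$ and $v\in E$ whose pieces avoid adjacent intervals. To force $\operatorname{range}(\undertilde{a})$ almost disjoint from $v$ you must strengthen the $\mathbb{MM}(\mathcal{H})$-condition, i.e.\ pass to a condensation $Y\in\mathcal{H}|X$ \emph{inside the Matet-adequate family}, and you need the whole trace $\bigcup Y$ (block interiors included) to avoid the intervals adjacent to those met by $v$. Two sets $Y_0\in\mathcal{U}_0$, $Y_1\in\mathcal{U}_1$ give neither membership in $\mathcal{H}$ nor any control of interior elements. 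This is exactly the content of the paper's Lemma~\ref{lemma-final} (a modified Lemma~5.10 of \cite{mildenberger2024exactly}): one thins $X$ so that each block swallows a whole interval and no interval meets two blocks, defines finite-to-one maps $h_0,h_1$ from the block structure, uses that the $Q$-point $\mathcal{W}$ is not nearly coherent with $\mathcal{U}_0$ or $\mathcal{U}_1$ to find $W\in\mathcal{W}$, $A\in\mathcal{U}_0$, $B\in\mathcal{U}_1$ with disjoint images, and then selects blocks of $X'$ by their minima and maxima to obtain $Y\in\mathcal{H}$. By contrast, the step you flag as the ``one genuinely new verification'' (that the coordinate ultrafilters are non-isomorphic Ramsey ultrafilters and $E$ can be arranged non-isomorphic to both) is immediate, since the family is $\mathrm{FIN}^{[\infty]}(\bar{\mathcal{U}})$ built from the given $\mathcal{U}_0,\mathcal{U}_1$; the real new ingredient is the construction of $Y\in\mathcal{H}$ avoiding the $v$-adjacent intervals, which your proposal is missing.
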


\begin{defi}

Let $\mathrm{FIN}=[\omega]^{<\omega}\setminus \{\varnothing\}$. For $s,t\in\mathrm{FIN}$, we write $s<_b t$ if $\max(s)<\min(t)$. By a \emph{block sequence}, we mean a $<_b$-strictly increasing sequence $X : \operatorname{dom}(X)$ $ \to$ $\mathrm{FIN}$, where $\operatorname{dom}(X)\in\omega\cup\{\omega\}$. The initial segment relation will be denoted by $\sqsubseteq$ and $\sqsubset$ will mean proper initial segment. The set of all finite block sequences of length $n \in \omega$ is denoted by $\mathrm{FIN}^{[n]}$, ${\mathrm{FIN}}^{[\infty]}$ denotes the set of all infinite block sequences and $\mathrm{FIN}^{[< n]} := \bigcup_{k < n} \mathrm{FIN}^{[k]}$, including the empty sequence.

In the remainder of the paper, capital letters $ X, Y, Z$ will denote elements of $\mathrm{FIN}^{[\infty]}$, small letters $a, b, c$ will denote elements of $\mathrm{FIN}^{[< \infty]}$ and $s, t, u$ will denote elements of $\mathrm{FIN}$.
\begin{enumerate}[label=(\roman*)]
    \item We write $s <_b a$ if and only if $s <_b a(0)$, and $a <_b b$ if and only if $a(|a|-1) <_b b(0)$, whenever $a,b \neq \varnothing$. Similarly, define $s <_b X$ and $a <_b X$.
    
    \item Assume that $a <_b b$. Then $a^{\smallfrown}b$ denotes the concatenation of $a$ and $b$.

    \item $[X] := \{s \in \mathrm{FIN} : s = \bigcup_{i \in I} X(i) \ \text{for some finite $I \subseteq \omega$}\}$, and $[a]$ is defined analogously. If for all $i \in \omega$, $X(i) \in [Y]$, we write $X \leq Y$ and call $X$ a \emph{condensation} of $Y$. The relations $a \leq X$ and $a \leq b$ are defined analogously. We denote by $[X]^{[\infty]}$ the set of infinite block sequences $X'$ with $X' \leq X$. For $n \in \omega$, $[X]^{[n]}$ denotes the set of block sequences $a \in \mathrm{FIN}^{[n]}$ with $a \leq X$, and $[X]^{[<n]} := \bigcup_{n' < n} [X]^{[n']}$, for $n\in\omega\cup\{\omega\}$.
    
    \item If $a\leq X$, then we define $d_X(a)$ to be the least integer $n\in\omega$ such that $a\leq X\upharpoonright (n+1)$. 

    \item Let $n \in \omega$. $X / n = \langle X(i_0+i): i \in \omega \rangle$, where $i_0$ is the least index with $\operatorname{min}(X(i_0)) > n$. For $s \in \mathrm{FIN}$, we will also write $X / s$ to mean $X / \operatorname{max}(s)$, and $X / a$ to mean $X / \operatorname{max}(a(|a|-1))$, for $\varnothing \neq a \in \mathrm{FIN}^{[<\infty]}$.  Define $a/n,\; a/s,\; a/b$ analogously.

    \item $X \leq^* Y$ if and only if there is some $n \in \omega$ with $X / n \leq Y$.
\end{enumerate}
\end{defi}

The famous theorem of Hindman and its later generalization of Taylor given below are the primary reasons that make this space interesting.
\begin{thm}[Hindman \cite{hind}]

If $c:\mathrm{FIN}\to 2$ is a coloring, then there is some $X\in\mathrm{FIN}^{[\infty]}$ such that $c\upharpoonright [X]$ is constant.
    
\end{thm}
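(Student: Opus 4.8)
The plan is to prove this by the Galvin--Glazer idempotent-ultrafilter method, applied to the partial semigroup consisting of $\mathrm{FIN}$ under disjoint union. First I would fix the ambient structure. Let $\gamma\mathrm{FIN}$ be the set of ultrafilters $\mathcal{U}$ on $\mathrm{FIN}$ with $\{t\in\mathrm{FIN}:\min(t)>n\}\in\mathcal{U}$ for every $n\in\omega$; this is a nonempty closed, hence compact, subspace of the Stone space of $\mathrm{FIN}$. For $s\in\mathrm{FIN}$ and $A\subseteq\mathrm{FIN}$ set $s^{-1}A:=\{t:\, s\cap t=\varnothing\ \text{and}\ s\cup t\in A\}$, and define the product by: $A\in\mathcal{U}\cdot\mathcal{V}$ iff $\{s:\, s^{-1}A\in\mathcal{V}\}\in\mathcal{U}$. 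The routine verifications are that $\cdot$ maps $\gamma\mathrm{FIN}\times\gamma\mathrm{FIN}$ into $\gamma\mathrm{FIN}$, is associative (via $s^{-1}(t^{-1}A)=(s\cup t)^{-1}A$ for disjoint $s,t$), and is right-continuous (each map $\mathcal{U}\mapsto\mathcal{U}\cdot\mathcal{V}$ is continuous); hence $(\gamma\mathrm{FIN},\cdot)$ is a compact right-topological semigroup, and the Ellis--Numakura lemma provides an idempotent $\mathcal{U}=\mathcal{U}\cdot\mathcal{U}$.

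Given the coloring $c$, one color class $A:=c^{-1}(\{i\})$ lies in $\mathcal{U}$. By the standard idempotent lemma (in its partial-semigroup form), $A^{\star}:=\{s\in A:\, s^{-1}A\in\mathcal{U}\}$ again belongs to $\mathcal{U}$, and $s^{-1}(A^{\star})\in\mathcal{U}$ for every $s\in A^{\star}$. I would then build the block sequence recursively: pick $s_0\in A^{\star}$; having chosen $s_0<_b\cdots<_b s_k$ with $\bigcup_{j\in I}s_j\in A^{\star}$ for every nonempty $I\subseteq\{0,\dots,k\}$, the set $A^{\star}\cap\bigcap\{(\bigcup_{j\in I}s_j)^{-1}A^{\star}:\varnothing\neq I\subseteq\{0,\dots,k\}\}$ is a finite intersection of members of $\mathcal{U}$, so it lies in $\mathcal{U}$; since $\mathcal{U}\in\gamma\mathrm{FIN}$ I may choose a member $s_{k+1}$ of it with $\min(s_{k+1})>\max(s_k)$. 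Then $s_{k+1}$ is disjoint from, and $<_b$-above, each $\bigcup_{j\in I}s_j$ with $I\subseteq\{0,\dots,k\}$, so all of the unions $\bigcup_{j\in I'}s_j$ with $\varnothing\neq I'\subseteq\{0,\dots,k+1\}$ again lie in $A^{\star}$. Setting $X:=\langle s_0,s_1,\dots\rangle$, we obtain a genuine block sequence with $[X]\subseteq A^{\star}\subseteq A=c^{-1}(\{i\})$, i.e., $c\upharpoonright[X]$ is constant.

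The one genuinely delicate point is the interplay between the partiality of the operation and the block-sequence requirement: the algebra only sees $\mathrm{FIN}$ under disjoint union, which a priori produces merely a sequence of pairwise disjoint sets rather than a $<_b$-increasing one. It is exactly the passage to $\gamma\mathrm{FIN}$ — ultrafilters concentrated on sets of arbitrarily large minimum — that repairs this: it keeps $\cdot$ well-defined (so $\gamma\mathrm{FIN}$ is a subsemigroup and Ellis--Numakura applies) and it simultaneously guarantees, at each step of the recursion, a witness whose minimum exceeds $\max(s_k)$, making $X$ an honest block sequence. Everything else — associativity and continuity of $\cdot$, existence of an idempotent, the passage from $A$ to $A^{\star}$ — is the usual Galvin--Glazer package. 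Alternatively, one could bypass the Stone--\v{C}ech machinery altogether by identifying $s$ with $\sum_{k\in s}2^{k}$, invoking the classical finite-sums form of Hindman's theorem for $(\mathbb{N},+)$, and then thinning the witnessing sequence so that successive binary supports are order-separated, in order to recover a block sequence; but the ultrafilter argument is shorter and more self-contained.
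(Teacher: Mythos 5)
The paper does not prove this statement at all --- it is quoted as a classical theorem with a citation to Hindman's original article, so there is no ``paper proof'' to match; what matters is whether your argument stands on its own, and it does. Your proof is the standard Galvin--Glazer idempotent-ultrafilter argument for the partial semigroup $(\mathrm{FIN},\cup)$ (the proof one finds in Todorcevic's or Hindman--Strauss's books), which is genuinely different from Hindman's original combinatorial proof in the cited paper. The key points check out: restricting to $\gamma\mathrm{FIN}$ is exactly what makes $\mathcal{U}\cdot\mathcal{V}$ an ultrafilter (since $s^{-1}A\cup s^{-1}(\mathrm{FIN}\setminus A)$ is only the set of $t$ disjoint from $s$, one needs $\{t:\min(t)>\max(s)\}\in\mathcal{V}$), it is closed in the Stone space and closed under the product, right translations $\mathcal{U}\mapsto\mathcal{U}\cdot\mathcal{V}$ are continuous, so Ellis--Numakura applies; the passage from $A$ to $A^{\star}$ and the fact that $s^{-1}(A^{\star})\in\mathcal{U}$ for $s\in A^{\star}$ follow from idempotence together with the identity $t^{-1}(s^{-1}A)=(s\cup t)^{-1}A$ for disjoint $s,t$; and your recursion correctly keeps all finite unions inside $A^{\star}$ while the tail sets $\{t:\min(t)>\max(s_k)\}\in\mathcal{U}$ turn the output into an honest $<_b$-increasing block sequence, so $[X]\subseteq A$ in the paper's sense of $[X]$. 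Two small remarks: you use disjoint union rather than the more common operation restricted to $s<_b t$, which is harmless precisely because your ultrafilters concentrate on tails; and the alternative route via the finite-sums theorem sketched at the end is not quite free --- extracting a sum subsystem whose binary supports are order-separated needs a lemma (e.g., a pigeonhole argument producing subsums divisible by high powers of $2$) --- but since that is offered only as an aside, it does not affect the main argument.
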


\begin{thm}[Taylor \cite{Tay}]

Let $n,r$ be positive integers and $c:\mathrm{FIN}^{[n]}\to r$ some coloring. Then there is some $X\in\mathrm{FIN}^{[\infty]}$ such that $c\upharpoonright [X]^{[n]}$ is constant.

\end{thm}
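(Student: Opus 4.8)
The plan is to prove the statement by induction on $n$, using Hindman's theorem as the base case and a fusion argument (again driven by Hindman) for the inductive step. It is cleanest to prove the \emph{relativized} form: for every $X \in \mathrm{FIN}^{[\infty]}$, every positive integer $r$, and every $c : [X]^{[n]} \to r$, there is a condensation $Z \leq X$ with $c\upharpoonright [Z]^{[n]}$ constant. Taking $X = \langle \{0\},\{1\},\{2\},\dots\rangle$, for which $[X]^{[n]} = \mathrm{FIN}^{[n]}$, recovers the stated theorem, and the relativized form is what the induction actually needs.

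For the base case $n = 1$, I would first upgrade Hindman's theorem from two to $r$ colors by the standard induction on $r$ (merging two colors, applying the two-color version, and recursing on the coloring of $[X']$, which under $\cup$ is isomorphic as a semigroup to $\mathrm{FIN}$). Relativizing to an arbitrary $X$ is then immediate, since $([X], \cup)$ is itself isomorphic to $(\mathrm{FIN}, \cup)$ via the block structure of $X$, so that a monochromatic sub-semigroup corresponds to a condensation $Z \leq X$ with $c\upharpoonright[Z]^{[1]}$ constant.

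The inductive step is the heart of the argument. Assuming the relativized statement for $n$, I fix $c : [X]^{[n+1]} \to r$ and write each $a' \in [X]^{[n+1]}$ uniquely as $a' = a\smallfrown\langle s\rangle$ with $a \in [X]^{[n]}$, $s \in [X]^{[1]}$ and $a <_b s$. The first goal is to stabilize the last coordinate. The plan is to build a fusion sequence $X = X_0 \geq X_1 \geq \cdots$ with limit condensation $X_\infty \leq X$: at stage $m$, having fixed the blocks $X_\infty(0),\dots,X_\infty(m-1)$, I consider the finitely many $a \leq \langle X_\infty(0),\dots,X_\infty(m-1)\rangle$ in $\mathrm{FIN}^{[n]}$ and apply the $r$-color Hindman theorem successively to thin the current reservoir so that, for each such $a$, the map $s \mapsto c(a\smallfrown\langle s\rangle)$ is constant on the length-one block sequences from the reservoir lying $<_b$-above $a$. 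The key bookkeeping point is that if $a \leq X_\infty\upharpoonright(d_{X_\infty}(a)+1)$ and $a <_b s$ with $s \in [X_\infty]^{[1]}$, then $\min(s) > \max(X_\infty(d_{X_\infty}(a)))$, so $s$ is a union of blocks chosen at stages exceeding $d_{X_\infty}(a)$ and hence lies in the reservoir that was already stabilized for $a$ (which only shrinks thereafter). This yields a well-defined coloring $\hat c : [X_\infty]^{[n]} \to r$ with $\hat c(a) = c(a\smallfrown\langle s\rangle)$ for every admissible $s$.

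Finally, I would apply the inductive hypothesis, in its relativized form, to $\hat c : [X_\infty]^{[n]} \to r$, obtaining $Z \leq X_\infty$ with $\hat c\upharpoonright [Z]^{[n]}$ constant, say equal to $j$. For any $a' = a\smallfrown\langle s\rangle \in [Z]^{[n+1]}$ we have $a \in [Z]^{[n]}$ and $s \in [Z]^{[1]} \subseteq [X_\infty]^{[1]}$ with $a <_b s$, so $c(a') = \hat c(a) = j$; thus $c\upharpoonright [Z]^{[n+1]}$ is constant and the induction closes. I expect the main obstacle to be the fusion step: one must order the finite initial parts so that the stabilization achieved for each $a$ is never undone by later thinning, and must verify the containment claim above, which is exactly what guarantees that $\hat c$ is well-defined on all of $[X_\infty]^{[n]}$ rather than merely on the parts handled at a single stage.
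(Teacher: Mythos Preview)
The paper does not prove this statement; it merely records Taylor's theorem as a background result with a citation to \cite{Tay}, so there is no in-paper proof to compare against. Your proposal is the standard induction-on-$n$ argument (Hindman for the base case, a fusion/diagonalization to stabilize the last coordinate, then the inductive hypothesis on the reduced $n$-coloring), and it is correct as outlined. One small point worth making explicit in the fusion step: the reason $\min(s) > \max\bigl(X_\infty(d_{X_\infty}(a))\bigr)$ for $a <_b s$ is that the top block $a(n-1)$ of $a$, being a union of $X_\infty$-blocks with largest index $d_{X_\infty}(a)$, satisfies $\max(a(n-1)) = \max\bigl(X_\infty(d_{X_\infty}(a))\bigr)$; you implicitly use this equality, and stating it removes any doubt that the stabilization achieved at stage $d_{X_\infty}(a)+1$ covers every admissible $s$.
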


\begin{defi}
Let $\mathcal{H}\subseteq \mathrm{FIN}^{[\infty]}$ be a nonempty family. We define
\[\mathbb{MM}(\mathcal{H})=\{\langle a, X\rangle : a\in\mathrm{FIN}^{[<\infty]}, a<_b X \in \mathcal{H}\}.\]
The order is given by $\langle a, X\rangle \leq_{\mathbb{MM}(\mathcal{H})} \langle b, Y\rangle$ if and only if $a\sqsupseteq b$, $a/b\leq Y$, and $X\leq Y$. We write $\mathbb{MM}$ for the unrestricted variant $\mathbb{MM}(\mathrm{FIN}^{[\infty]})$.
\end{defi}

Note the similarity of this forcing notion to traditional Matet forcing, which consists of conditions as above but with stems in $\mathrm{FIN}$ and with the relation $\sqsupseteq$ replaced by $\supseteq$. The (unrestricted) forcing notion $\mathbb{MM}$ was introduced by  Garc\'ia-\'Avila in \cite{PFIN}, and studied, for example, in \cite{BrendleGarciaAvila}, \cite{cov(M)<c}, and \cite{CalderonDiPriscoMijares}.

Garc\'ia-\'Avila showed that $\mathbb{MM}$ is proper and has the pure decision and Laver properties \cite{PFIN}. We will be concerned with $\mathbb{MM}(\mathcal{H})$ for certain families $\mathcal{H}\subseteq \mathrm{FIN}^{[\infty]}$, and to obtain properness, pure decision etc. for $\mathbb{MM}(\mathcal{H})$, we need $\mathcal{H}$ to satisfy certain conditions. In \cite{Eisworth}, Eisworth isolated the following properties of a family $\mathcal{H}$ that will suffice for this purpose. 

\begin{defi}[Eisworth \cite{Eisworth}]
For $\mathcal{H}\subseteq \mathrm{FIN}^{[\infty]}$ and $X\in\mathcal{H}$, define $\mathcal{H}|X:=\{Y\in\mathcal{H} : Y\leq X\}$. A nonempty family $\mathcal{H}\subseteq \mathrm{FIN}^{[\infty]}$ is called \emph{Matet-adequate} if and only if the following properties hold:

\begin{enumerate}[label=(\roman*)]
    \item $\mathcal{H}$ is closed under finite changes,
    \item $\mathcal{H}$ is closed $\leq^*$-upwards,
    \item For any $\leq$-descending sequence $X_0\geq X_1\geq \ldots$ of members of $\mathcal{H}$, there is some $X\in\mathcal{H}$ such that $X\leq^* X_n$ for all $n \in \omega$,
    \item For any $X\in \mathcal{H}$ and $c : [X]\to 2$, there is $Y\in\mathcal{H}|X$ such that $c \upharpoonright [Y]$ is constant.
\end{enumerate}

\end{defi}

As shown in \cite{Eisworth} and \cite{mildenberger2024exactly}, Matet-adequate families satisfy the following seemingly stronger properties as well:

\begin{lem}[Eisworth \cite{Eisworth}, Mildenberger \cite{mildenberger2024exactly}]\label{Matetadequateproperties}

Let $\mathcal{H}$ be Matet-adequate. Then the following properties also hold:

\begin{enumerate}[label=(\roman*)]
    \item For any positive integer $n$, $X\in\mathcal{H}$, and $c:[X]^n\to 2$, there is $Y\in\mathcal{H}|X$ such that $c\upharpoonright [Y]^n$ is constant.
    \item For any $X_0\geq X_1\geq\ldots$ from $\mathcal{H}$, there is $X\in\mathcal{H}|X_0$ such that for all $s\in[X]: X/s\leq X_{\max(s)+1}$,  (this is called a diagonal intersection of the sequence $\langle X_0, X_1,\ldots\rangle$).
\end{enumerate}
\end{lem}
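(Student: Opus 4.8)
The plan is to prove part~(i) by induction on $n$ and then to derive part~(ii) from the case $n=2$ of part~(i).

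\emph{Part (i).} Induct on $n$. The case $n=1$ is precisely the pigeonhole condition~(iv) of Matet-adequacy, since a length-$1$ block sequence $\langle s\rangle\leq X$ is the same datum as a block $s\in[X]$. For the inductive step, let $c:[X]^{[n+1]}\to 2$, and for $a\in[X]^{[n]}$ consider the last-coordinate fibre $c_a:[X/a]\to 2$, $c_a(t)=c(a^{\smallfrown}\langle t\rangle)$. I would first produce $Z\in\mathcal{H}|X$ on which all fibres are homogeneous, i.e.\ with $c_a\upharpoonright[Z/a]$ constant for every $a\in[Z]^{[n]}$; then the colouring $\bar c:[Z]^{[n]}\to 2$ sending $a$ to the constant value of $c_a$ is well defined, and the inductive hypothesis applied to $\bar c$ yields $Y\in\mathcal{H}|Z\subseteq\mathcal{H}|X$ with $\bar c\upharpoonright[Y]^{[n]}$ constant; writing any $b\in[Y]^{[n+1]}$ as $a^{\smallfrown}\langle t\rangle$ with $a=b\upharpoonright n$ and $t\in[Y/a]\subseteq[Z/a]$, one gets $c(b)=c_a(t)=\bar c(a)$, so $c$ is constant on $[Y]^{[n+1]}$. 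To construct $Z$, I run a fusion: build a $\leq$-descending sequence $Y_0\geq Y_1\geq\cdots$ in $\mathcal{H}$ with $Y_{k+1}\upharpoonright(k+1)=Y_k\upharpoonright(k+1)$ where, having frozen the first $k+1$ blocks $z_0,\dots,z_k$ of the eventual $Z$, I apply condition~(iv) finitely many times --- once for each $a\in\mathrm{FIN}^{[n]}$ with $a\leq\langle z_0,\dots,z_k\rangle$ --- to replace the current tail of $Y_k$ by some $W\in\mathcal{H}$ on which every such $c_a$ is constant, and put $Y_{k+1}:=\langle z_0,\dots,z_k\rangle^{\smallfrown}W$ (still in $\mathcal{H}$ by closure under finite changes~(i)). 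The pointwise limit $Z=\langle z_0,z_1,\dots\rangle$ then has the required fibre-homogeneity; the one delicate point is that $Z$ still lies in $\mathcal{H}$, which is secured by the closure conditions~(i)--(iii) of Matet-adequacy (in particular the $\sigma$-closure-modulo-finite condition~(iii)) --- so, unlike the case $n=1$, the inductive step genuinely needs all of~(i)--(iii), not just the pigeonhole.

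\emph{Part (ii).} Using conditions~(iii) and~(i), first fix $W\in\mathcal{H}|X_0$ with $W\leq^* X_n$ for all $n$, and for each $n$ let $\mu_n$ be least with $W/\mu_n\leq X_n$, so $\mu_0=0$ and $\langle\mu_n\rangle$ is non-decreasing. Colour $c:[W]^{[2]}\to 2$ by $c(\langle u,v\rangle)=1$ iff $v\in[X_{\max(u)+1}]$, and apply part~(i) with $n=2$ to obtain $Y\in\mathcal{H}|W$ with $c\upharpoonright[Y]^{[2]}$ constant. The constant value cannot be $0$: with $u=Y(0)$ and $N=\max(Y(0))+1$, for large $i\geq 1$ all the $W$-blocks whose union is $Y(i)$ have minimum exceeding $\mu_N$, so $Y(i)\in[W/\mu_N]\subseteq[X_N]$ and hence $c(\langle Y(0),Y(i)\rangle)=1$. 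Thus $c\upharpoonright[Y]^{[2]}\equiv 1$, and this $Y$ is the sought diagonal intersection: given $s\in[Y]$, let $j$ be largest with $Y(j)\subseteq s$, so that $\max(s)=\max(Y(j))$ and $Y/s=\langle Y(j+1),Y(j+2),\dots\rangle$; for each $i>j$ the pair $\langle Y(j),Y(i)\rangle$ lies in $[Y]^{[2]}$, whence $Y(i)\in[X_{\max(Y(j))+1}]=[X_{\max(s)+1}]$ --- i.e.\ every block of $Y/s$ is a union of blocks of $X_{\max(s)+1}$, which is exactly $Y/s\leq X_{\max(s)+1}$. Since $Y\in\mathcal{H}$ and $Y\leq W\leq X_0$, we have $Y\in\mathcal{H}|X_0$, as required.

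The main obstacle is in part~(i): guaranteeing that the block sequence produced by the (countable) fusion does not leave $\mathcal{H}$, which is exactly the purpose of the closure conditions~(i)--(iii) in the definition of Matet-adequacy. Granting part~(i), the only non-routine ingredient in part~(ii) is the idea of colouring \emph{pairs} $\langle u,v\rangle$ by the alignment relation $v\in[X_{\max(u)+1}]$ and checking, via the $\leq^*$-cutoffs $\mu_n$, that this colour is unavoidably $1$ on every $Y\in\mathcal{H}|W$, so that part~(i) returns precisely the required diagonalisation.
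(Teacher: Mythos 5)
Your part~(ii) is a correct and rather elegant reduction to the case $n=2$ of part~(i): the pair-colouring $c(\langle u,v\rangle)=1$ iff $v\in[X_{\max(u)+1}]$, the verification via the cutoffs $\mu_n$ that the homogeneous colour must be $1$ (using pairs $\langle Y(0),Y(i)\rangle$ with $i$ large), and the read-off of the diagonalisation are all sound (modulo the harmless fix that condition~(iii) only gives $W\leq^* X_0$, so you must pass to a tail of $W$, using closure under finite changes, to get $W\in\mathcal{H}|X_0$). The problem is in part~(i), at exactly the point you flag as delicate and then wave through: membership of the fusion limit $Z=\langle z_0,z_1,\dots\rangle$ in $\mathcal{H}$ does \emph{not} follow from conditions (i)--(iii) of Matet-adequacy. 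Condition~(iii) applied to your descending chain $Y_0\geq Y_1\geq\cdots$ only produces \emph{some} $X'\in\mathcal{H}$ with $X'\leq^* Y_k$ for all $k$; it says nothing about the particular limit $Z$, whose blocks $z_{k+1}=W_k(0)$ are specific choices, and $\leq^*$-upward closure is of no help because no member of $\mathcal{H}$ is known to be $\leq^*$-below $Z$ (being a condensation of every $Y_k$ does not make $X'$ a condensation of the much thinner $Z$). Concretely, for $\mathcal{H}=\mathrm{FIN}^{[\infty]}(\bar{\mathcal{U}})$ nothing guarantees that, say, $\min[Z]=\{\min(W_k(0)):k\in\omega\}$ lies in $\mathcal{U}_0$, let alone that $Z$ has the full defining property, so the naive pointwise fusion can leave $\mathcal{H}$.

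What actually rescues such fusion arguments is precisely the diagonalisation property: one replaces the pointwise limit by a diagonal intersection \emph{inside} $\mathcal{H}$ of the sequence $\langle Y_k\rangle$ and then uses $Z/s\leq Y_{\max(s)+1}$-type estimates (with suitable bookkeeping) to get the fibre-homogeneity. But that is item~(ii) of the lemma, which you derive from item~(i) for $n=2$ --- so as structured your argument is circular: (i) needs (ii) (or an independent proof that $\mathcal{H}$ is closed under this kind of fusion), while (ii) rests on (i). The paper itself gives no proof here but cites Eisworth and Mildenberger, where the diagonalisation/fusion step is established from the four axioms before the $n$-dimensional partition property is run; to repair your write-up you would need to prove (ii) (or fusion-closure) first, directly from Matet-adequacy, and only then carry out your induction for (i) with the diagonal intersection in place of the pointwise limit. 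Your derivation of (ii) from (i) remains a nice observation, but it cannot serve as the foundation of the proof.
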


Note that when $\mathcal{H}$ is Matet-adequate, a generic $G$ for $\mathbb{MM}(\mathcal{H})$ induces a generic infinite block sequence $\eta$. In \cite{CalderonDiPriscoMijares}, Calderon, Di Prisco, and Mijares prove that Matet-adequate families, and what they define as \emph{selective coideals} on $\mathrm{FIN}^{[\infty]}$, coincide (Theorem~5, \cite{CalderonDiPriscoMijares}). Consequently, we have the pure decision property relative to Matet-adequate families:

\begin{thm}[Theorem 10, Calderon-Di Prisco-Mijares \cite{CalderonDiPriscoMijares}]
   Assume that $\mathcal{H}\subseteq\mathrm{FIN}^{[\infty]}$ is a Matet-adequate family. Then for any condition $\langle a, X\rangle \in \mathbb{MM}(\mathcal{H})$, and any sentence $\varphi$ of the forcing language, there is $Y\in \mathcal{H}|X$ such that $\langle a, Y \rangle$ decides $\varphi$.
\end{thm}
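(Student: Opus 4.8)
The plan is to prove this by the classical combinatorial-forcing argument of Galvin--Prikry and Mathias, carried out inside the coideal $\mathcal{H}$; this is the same thing as deriving the pure-decision (Prikry) property of a restricted Mathias-type forcing from the Ellentuck theorem localized to the selective coideal $\mathcal{H}$ on the Ramsey space $\mathrm{FIN}^{[\infty]}$. Call a condition $\langle b, Z\rangle\in\mathbb{MM}(\mathcal{H})$ \emph{good} if $\langle b, Z\rangle\Vdash\varphi$, and \emph{bad} if no pure extension $\langle b, Z'\rangle$ with $Z'\in\mathcal{H}|Z$ is good. Two trivial remarks: badness is inherited by pure extensions, and a condition that is not bad has a good, hence $\varphi$-deciding, pure extension. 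So it is enough to show that if $\langle a, X\rangle$ is bad, then there is $Y\in\mathcal{H}|X$ with $\langle a, Y\rangle\Vdash\neg\varphi$.

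The combinatorial core is a one-step shrinking lemma: if $\langle b, Z\rangle$ is bad, then there is $W\in\mathcal{H}|Z$ such that $\langle b^{\smallfrown}\langle s\rangle,\, W/s\rangle$ is bad for every $s\in[W]$ with $b<_b s$. To prove it, color $c:[Z]\to 2$ by setting $c(s)=1$ iff some pure extension of $\langle b^{\smallfrown}\langle s\rangle,\, Z/s\rangle$ is good, and apply the Hindman-type property~(iv) of Matet-adequacy to obtain $W\in\mathcal{H}|Z$ with $c\upharpoonright[W]$ constant. If the constant value is $0$ we are done. If it is $1$, one uses property~(iii) together with the diagonal-intersection property of Lemma~\ref{Matetadequateproperties}(ii) to extract a single $Y\in\mathcal{H}|W$ such that, for each $s\in[Y]$ with $b<_b s$, the tail $Y/s$ is a condensation of a good witness for $s$; then every extension of $\langle b, Y\rangle$ with strictly longer stem forces $\varphi$, so by density $\langle b, Y\rangle\Vdash\varphi$, contradicting that $\langle b, Z\rangle$ is bad. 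Hence the constant value is $0$, as required.

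One then iterates this along a fusion. Put $\langle a, X\rangle=:\langle a, X_0\rangle$ and build a $\leq$-descending $X_0\geq X_1\geq\cdots$ in $\mathcal{H}$ so that at stage $n$ the one-step lemma is applied simultaneously to the finitely many conditions $\langle b,\, X_n/b\rangle$ with $a\sqsubseteq b$, $b/a\leq X_n$ and with at most $n$ blocks in $b/a$ (these are bad, by induction and the inheritance of badness), and then $X_{n+1}$ is taken to be a common condensation in $\mathcal{H}$ of the block sequences so produced (using Lemma~\ref{Matetadequateproperties}(ii)). The diagonal intersection $Y\in\mathcal{H}|X$ of the $X_n$ then satisfies: $\langle b,\, Y/b\rangle$ is bad for \emph{every} $b\sqsupseteq a$ with $b/a\leq Y$. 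Finally $\langle a, Y\rangle\Vdash\neg\varphi$: if some $\langle b, W\rangle\leq_{\mathbb{MM}(\mathcal{H})}\langle a, Y\rangle$ forced $\varphi$, then $\langle b, W\rangle$ would be good with $W\in\mathcal{H}|(Y/b)$, contradicting badness of $\langle b,\, Y/b\rangle$. This gives the theorem; alternatively, the shrinking lemma could be packaged as a Maiden/Death game of the kind used elsewhere in the paper, but the combinatorial-forcing presentation seems cleanest here.

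The main obstacle I expect is entirely in the bookkeeping of the two fusions: checking that the amalgamations and diagonal intersections can be carried out while remaining inside $\mathcal{H}$, and that enlarging the stem one block at a time really does exhaust all finite initial segments of the generic block sequence. This is exactly the place where one needs the \emph{full} strength of Matet-adequacy rather than only the partition property~(iv): closure under finite changes~(i), $\leq^*$-upward closure~(ii), existence of lower bounds for descending sequences~(iii), and the diagonal-intersection consequence in Lemma~\ref{Matetadequateproperties}(ii) are precisely what make the shrinking and fusion steps go through. Apart from that, the proof is a verbatim transcription into $\mathrm{FIN}^{[\infty]}$ of Mathias's proof of the pure-decision property of Mathias forcing.
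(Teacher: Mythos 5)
First, note that the paper does not prove this statement at all: it is quoted verbatim from Calderon--Di Prisco--Mijares, so there is no internal proof to compare against; your attempt has to stand on its own. It does not quite stand, because of a genuine gap at the key step of your one-step shrinking lemma. In the case where the coloring $c$ is constantly $1$ on $[W]$, you have, for each $s\in[W]$, some good witness $Z_s\in\mathcal{H}|(Z/s)$, and you claim to ``extract a single $Y\in\mathcal{H}|W$ such that for each $s\in[Y]$ the tail $Y/s$ is a condensation of a good witness for $s$.'' This extraction is exactly what Matet-adequacy does \emph{not} give you: $\mathcal{H}$ is only a coideal, not a filter, so finitely many of the $Z_s$ need not have a common condensation in $\mathcal{H}$ --- indeed two block sequences need have no common condensation at all (e.g.\ $\langle\{2k,2k+1\}: k\in\omega\rangle$ and $\langle\{2k+1,2k+2\}: k\in\omega\rangle$), so neither Lemma~\ref{Matetadequateproperties}(ii) nor property (iii) can amalgamate independently chosen witnesses. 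Choosing the witnesses sequentially along a fusion does not repair this, because your coloring only promises a good witness below $Z/s$, whereas at the relevant stage of a fusion you would need one below the \emph{current} tail, which is a proper condensation of $Z/s$. This is precisely the point where the happy-family/selective-coideal argument differs from the ultrafilter argument (where witnesses can simply be intersected), so the claim that the proof is a ``verbatim transcription'' of the Ramsey-ultrafilter Mathias argument is where the trouble hides.

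The standard repair is a reordering: first perform a \emph{conditional} fusion in which, for each of the finitely many stems $b$ with $b\in[X\upharpoonright n]^{[<\infty]}$ handled at stage $n$, you ask whether some member of $\mathcal{H}$ below the \emph{current} tail makes $\langle a^{\smallfrown}b,\cdot\rangle$ force $\varphi$ (or decide $\varphi$), and shrink to such a witness exactly when one exists; the diagonal intersection $W$ then satisfies ``if some pure strengthening of $\langle a^{\smallfrown}b, W/b\rangle$ is good, then $\langle a^{\smallfrown}b, W/b\rangle$ is already good.'' Only \emph{then} colour $s\in[W]$ by whether $\langle b^{\smallfrown}\langle s\rangle, W/s\rangle$ itself is good and homogenize using property (iv); in the homogeneous-$1$ case your density argument applies directly with no amalgamation, and in the homogeneous-$0$ case badness of all one-block extensions follows from the conditional property. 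This is the structure of the actual Calderon--Di Prisco--Mijares argument, and it is also the fusion pattern used in this paper's Lemmas~\ref{lemma:Laver property} and~\ref{lemma-deciding2}. Two smaller points: your stage-$n$ family of stems ``with at most $n$ blocks in $b/a$'' is infinite, not finite --- you must restrict to stems built from a fixed finite initial segment of the original sequence, i.e.\ to $[X\upharpoonright n]^{[<\infty]}$, so that every finite condensation of the final diagonal intersection is eventually handled; and with that bookkeeping your concluding density argument (every extension with longer stem lies below a handled condition) is fine.
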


Using this, it is straightforward to check that $\mathbb{MM}(\mathcal{H})$ is proper and has the $h$-Laver property for a suitably chosen function $h:\omega\to\omega$, whenever $\mathcal{H}$ is Matet-adequate. For completeness, we show the Laver property. Define $X_{\text{top}}(n)=\{n\}$ for all $n\in\omega$. Define $h:\omega\to\omega$ by $h(n)=|\{a\in\mathrm{FIN}^{[<\infty]}:a\leq X_{\text{top}}\upharpoonright (n+1)\}|$.

\begin{lem}\label{lemma:Laver property}

Assume that $\mathcal{H}$ is Matet-adequate. Then $\mathbb{MM}(\mathcal{H})$ has the $h$-Laver property: Whenever $\langle a, X\rangle\in\mathbb{MM}(\mathcal{H}), f\in\omega^\omega$, and $\undertilde{g}$ is a $\mathbb{MM}(\mathcal{H})$-name such that $\langle a, X\rangle\Vdash (\forall n) (\undertilde{g}(n)<f(n))$, there is some $Y\in\mathcal{H}|X$, and a function $H:\omega\to [\omega]^{<\omega}$ such that:
\begin{enumerate}[label=(\roman*)]
    \item $|H(n)|\leq h(n)$, for all $n\in\omega$,
    \item $\langle a, Y\rangle\Vdash (\forall n) \ \undertilde{g}(n)\in H(n)$.
\end{enumerate}
\end{lem}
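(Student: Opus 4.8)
The plan is to mimic the standard fusion/rank argument for Mathias-style forcings, using the pure decision property for Matet-adequate families (the preceding theorem of Calderon--Di Prisco--Mijares) as the engine, and the combinatorial properties of Matet-adequate families (Lemma~\ref{Matetadequateproperties}) to assemble a single condition out of countably many approximations. Fix $\langle a, X\rangle$, $f$, and $\undertilde g$ as in the statement. The key observation is that if $\langle b, Z\rangle \leq \langle a, X\rangle$ and $d_Z$-rank considerations are tracked correctly, the value of $\undertilde g(n)$ can only depend on the part of the generic block sequence that has been ``revealed'' by stage $n$, and there are at most $h(n)$ possible such finite revelations below a fixed coordinate.

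\textbf{Step 1 (deciding values on extensions of the stem).} First I would show: for every $\langle b, Z\rangle \in \mathbb{MM}(\mathcal{H})$ with $b \sqsupseteq a$ and $b \leq X$, $Z \leq X$, and every $n \in \omega$, there is $W \in \mathcal{H}|Z$ such that $\langle b, W\rangle$ decides $\undertilde g(n)$; this is immediate from the pure decision property. Now fix $n$ and a finite block sequence $c \leq X\upharpoonright(n+1)$ with $a \sqsubseteq c$ (there are at most $h(n)$ many choices of the relevant ``tail'' $c/a$, by definition of $h$). For each such $c$ I want to shrink to a common $Y_n \in \mathcal{H}|X$ such that $\langle c, Y_n\rangle$ decides $\undertilde g(n)$ for every relevant $c$ simultaneously; this is done by finitely many applications of pure decision followed by a finite $\leq$-meet inside $\mathcal{H}$, using that $\mathcal{H}$ is closed downward under $\leq$ (from Matet-adequacy, e.g.\ via property (iii) applied to finite descending chains) and closed under finite changes.

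\textbf{Step 2 (fusion).} Next I would build a $\leq$-descending sequence $X = X_0 \geq X_1 \geq X_2 \geq \cdots$ in $\mathcal{H}$, where $X_{n+1} \leq X_n$ is obtained from Step~1 applied at stage $n$ so that $\langle c, X_{n+1}\rangle$ decides $\undertilde g(n)$ for every $c \leq X_n\upharpoonright(n+1)$ extending $a$, and moreover arrange $X_{n+1}\upharpoonright(n+1) = X_n\upharpoonright(n+1)$ so that the initial segments stabilize. Then take a diagonal intersection $Y \in \mathcal{H}|X_0$ of $\langle X_0, X_1, \ldots\rangle$ as provided by Lemma~\ref{Matetadequateproperties}(ii), so that $Y/s \leq X_{\max(s)+1}$ for all $s \in [Y]$, and also $Y \leq^* X_n$ for all $n$ (shrinking finitely if needed, using closure under $\leq^*$ and finite changes to keep $a <_b Y$).

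\textbf{Step 3 (reading off $H$).} Finally I would define $H(n) = \{ k : \exists c\ (a \sqsubseteq c \leq X_n\upharpoonright(n+1)$ and $\langle c, X_{n+1}\rangle \Vdash \undertilde g(n) = k)\}$. Since the number of admissible $c$ is at most $h(n)$, we get $|H(n)| \leq h(n)$; and since $\langle a, Y\rangle \Vdash \undertilde g(n) < f(n)$, these are genuine finite sets. To verify (ii), suppose $\langle b, Z\rangle \leq \langle a, Y\rangle$ forces $\undertilde g(n) = k$; by the diagonal-intersection property and $Y \leq^* X_{n+1}$, the condition $\langle b, Z\rangle$ is compatible with $\langle c, X_{n+1}\rangle$ where $c := b \cap (\text{segment of the generic below coordinate } n+1)$, more precisely $c$ is the $\sqsubseteq$-largest initial segment of $b$ with $c \leq X_n\upharpoonright(n+1)$, so $k \in H(n)$; hence $\langle a, Y\rangle \Vdash \undertilde g(n) \in H(n)$. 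I expect the main obstacle to be the bookkeeping in Step~3: making precise exactly which initial segment $c$ of a generic-extending condition ``sees'' $\undertilde g(n)$, and checking that the diagonal intersection together with $Y\leq^* X_n$ indeed forces compatibility with the corresponding $\langle c, X_{n+1}\rangle$ — this is where the interplay between $\sqsubseteq$ on stems and $\leq$ on the block-sequence parts, which is the characteristic subtlety of $\mathbb{MM}$ compared to ordinary Mathias forcing, has to be handled carefully.
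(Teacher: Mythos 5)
Your overall architecture --- pure decision applied to the finitely many stems $a^{\smallfrown}b$ with $b\leq X\upharpoonright(n+1)$, a fusion sequence $X_0\geq X_1\geq\cdots$ in $\mathcal{H}$, a diagonal intersection $Y$ via Lemma~\ref{Matetadequateproperties}(ii), and $H(n)$ read off from the decided values --- is the same as the paper's. However, the justification you give in Step~1 is wrong as stated: Matet-adequate families are \emph{not} closed downward under $\leq$, and ``finite $\leq$-meets inside $\mathcal{H}$'' need not exist. Property (iii) of Matet-adequacy applied to a finite descending chain yields nothing beyond its last element; for $\mathcal{H}=\mathrm{FIN}^{[\infty]}(\bar{\mathcal{U}})$, passing to the subsequence of blocks of $X$ whose minima lie outside some set of $\mathcal{U}_0$ already leaves $\mathcal{H}$; and two condensations of $X$ need not have \emph{any} common condensation at all (pair up the blocks of $X$ in two offset ways). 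The repair is standard and is exactly what the paper does: handle the finitely many stems sequentially, obtaining $Z_{i+1}\in\mathcal{H}|Z_i$ by pure decision at each step (a decision with stem $a^{\smallfrown}b_i$ persists when the side sequence is condensed further), starting from $X_n/X(n)$ so that all stems in question lie entirely below the side condition. With that change, your Steps 1--2 coincide with the paper's construction; your additional stabilization $X_{n+1}\upharpoonright(n+1)=X_n\upharpoonright(n+1)$ is harmless given closure under finite changes, but unnecessary once the diagonal intersection is used.

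Second, your Step~3 --- which you yourself flag as the main obstacle --- is where the real content lies, and you have not carried it out. A condition $\langle a^{\smallfrown}c, Z\rangle\leq\langle a, Y\rangle$ has $c\leq Y$, so a single block of $c$ may be a union of $Y$-blocks (hence of $X$-blocks) lying on both sides of the ``cut'' relevant to stage $n$; in that case the ``$\sqsubseteq$-largest initial segment of the stem condensing $X_n\upharpoonright(n+1)$'' does not absorb everything below the cut, and the compatibility with $\langle c, X_{n+1}\rangle$ that you assert can fail, so conclusion (ii) does not yet follow from your construction. This is precisely the interplay between $\sqsubseteq$ on stems and $\leq$ on side sequences that you point to; it has to be resolved by a careful density argument organized around $d_X$ of (suitably extended) stems together with the diagonal-intersection property $Y/s\leq X_{\max(s)+1}$, not just by naming the initial segment. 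The paper's own write-up is also terse at this point, so I would not call your route a different approach --- but as submitted, your proposal establishes the fusion construction while leaving the forcing statement $\langle a, Y\rangle\Vdash\forall n\ \undertilde{g}(n)\in H(n)$ unproved.
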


\begin{proof}

Let $\langle a, X\rangle\in\mathbb{MM}(\mathcal{H})$, $f\in\omega^\omega$, $\undertilde{g}$ be as in the statement. By pure decision and since $\undertilde{g}$ is forced to be bounded by $f$, we find $X_0\in\mathcal{H}|X$ such that $\langle a, X_0\rangle$ decides $\undertilde{g}(0)$, say with value $k^0_\varnothing$. Set $H(0)=\{k^0_\varnothing\}$ and note that $|H(0)|\leq h(0)$. Starting with $X_0$, we will construct a descending chain $X_{n+1}\leq X_n/X(n)$ of elements of $\mathcal{H}|X_0$, and define $H(n)$ obeying the cardinality condition for each $n\in\omega$ such that, whenever $b\in[X]^{[<\infty]}$ and $d_X(b)=n+1$, $\langle a^{\smallfrown}b, X_{n+1}\rangle \Vdash \undertilde{g}(n+1)\in H(n+1)$. Assuming that this is accomplished, by Lemma~\ref{Matetadequateproperties}, we can find $Y\in\mathcal{H}|X_0$ which is a diagonal intersection of the sequence $\langle X_0, X_1, \ldots\rangle$, and it is easy to see that $\langle a, Y\rangle$ and $H$ are as required by the lemma.

For the construction itself, suppose $X_0,X_1,\ldots, X_n$ and $H\upharpoonright (n+1)$ are already constructed. List $\{ b_i : i \in m\}= [X\upharpoonright (n+1)]^{[<\infty]}$ for some $m\in\omega$, and note $m\leq h(n+1)$. Let $Z_0=X_n/X(n)$, and given $Z_i$ for $i<m$, choose $Z_{i+1}\in \mathcal{H}|Z_i$ and $k^{n+1}_{b_i}$ such that $\langle a^{\smallfrown} b_i, Z_{i+1}\rangle \Vdash \undertilde{g}(n+1)=k^{n+1}_{b_i}$. In the end, we set $X_{n+1}=Z_m$ and $H(n+1)=\{k^{n+1}_{b_i} : i\in m\}$.
\end{proof}

For $X\in\mathrm{FIN}^{[\infty]}$, write $\min[X]=\{\min(X(n)) : n\in\omega\}$ and $\max[X]=\{\max(X(n)) : n\in\omega\}$. We will be dealing with a particular type of a Matet-adequate family, the ones defined by Mildenberger in \cite{mildenberger2024exactly} with respect to two non-isomorphic Ramsey ultrafilters.

\begin{defi}[Definition 5.1, Mildenberger \cite{mildenberger2024exactly}]

Let $\mathcal{U}_0$ and $\mathcal{U}_1$ be two non-isomorphic Ramsey ultrafilters. Let $\bar{\mathcal{U}}=\langle \mathcal{U}_{0}, \mathcal{U}_{1}\rangle$. We define $\mathrm{FIN}^{[\infty]}(\bar{\mathcal{U}})=\{X \in \mathrm{FIN}^{[\infty]} : (\forall A\in \mathcal{U}_{0})(\forall B\in\mathcal{U}_{1}) (\exists Y \leq X) (\forall n) (\min(Y(n))\in A, \max(Y(n))\in B)\}$.

\end{defi}
In particular, if $X\in \mathrm{FIN}^{[\infty]}(\bar{\mathcal{U}})$, then $\min[X]\in \mathcal{U}_0, \max[X]\in \mathcal{U}_1$.

\begin{thm}[Theorem~5.2, Mildenberger \cite{mildenberger2024exactly}]

Given two non-isomorphic Ramsey ultrafilters $\bar{\mathcal{U}}=\langle \mathcal{U}_0, \mathcal{U}_1\rangle$, $\mathrm{FIN}^{[\infty]}(\bar{\mathcal{U}})$ is Matet-adequate.

\end{thm}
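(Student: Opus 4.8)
The plan is first to replace the definition of $\mathrm{FIN}^{[\infty]}(\bar{\mathcal U})$ by a transparent reformulation and then to check the four clauses against it. The reformulation I would establish is: $X\in\mathrm{FIN}^{[\infty]}(\bar{\mathcal U})$ if and only if $\min[X]\in\mathcal U_0$ and $\max[X]\in\mathcal U_1$. For ``$\Rightarrow$'' note that every $Y\le X$ has $\min[Y]\subseteq\min[X]$ and $\max[Y]\subseteq\max[X]$, so applying the defining property to $A=\omega\setminus\min[X]$ (were it in $\mathcal U_0$) and to $B=\omega\setminus\max[X]$ gives a contradiction. For ``$\Leftarrow$'', given $A\in\mathcal U_0$ and $B\in\mathcal U_1$ the index sets $P=\{i:\min X(i)\in A\}$ and $Q=\{i:\max X(i)\in B\}$ are infinite (their images under the relevant block-endpoint bijections are $A\cap\min[X]$ and $B\cap\max[X]$), so a greedy interleaving yields $p_0\le q_0<p_1\le q_1<\cdots$ with $p_k\in P$, $q_k\in Q$, and $Y(k):=\bigcup_{p_k\le i\le q_k}X(i)$ is a condensation of $X$ witnessing membership. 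Granted this, clauses (i) and (ii) are immediate: a finite change perturbs $\min[X],\max[X]$ only by finite sets, and if $X/m\le Z$ then $\mathcal U_0\ni\min[X]\cap(m,\omega)=\min[X/m]\subseteq\min[Z]$ and likewise $\max[X/m]\subseteq\max[Z]$, so $\min[Z]\in\mathcal U_0$ and $\max[Z]\in\mathcal U_1$.

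For clause (iii), let $X_0\ge X_1\ge\cdots$ lie in $\mathrm{FIN}^{[\infty]}(\bar{\mathcal U})$; then automatically $\min[X_0]\supseteq\min[X_1]\supseteq\cdots$ in $\mathcal U_0$ and $\max[X_0]\supseteq\max[X_1]\supseteq\cdots$ in $\mathcal U_1$. I would build $X$ recursively: at stage $i$ choose $a_i:=\min X(i)$ from $\min[X_i]\cap(\max X(i-1),\omega)\in\mathcal U_0$, let $p$ be the index of the block of $X_i$ with minimum $a_i$, choose $b_i:=\max X(i)$ from $\max[X_i]\cap[\max X_i(p),\omega)\in\mathcal U_1$, and set $X(i):=X_i(p)\cup\cdots\cup X_i(q)$ where $\max X_i(q)=b_i$. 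Then $X(i)<_b X(i+1)$, $X(i)\in[X_i]$ so $X\le^*X_k$ for every $k$, and it remains only to make $\{a_i\}=\min[X]\in\mathcal U_0$ and $\{b_i\}=\max[X]\in\mathcal U_1$. But the sequences $\langle a_i\rangle,\langle b_i\rangle$ are exactly Death's moves in the Shelah--Spinas game $G(\bar{\mathcal U})$ of Section~2 (for $n=2$): in round $2i$ the Maiden offers $\min[X_i]\cap(\max X(i-1),\omega)$ and Death picks $a_i$, in round $2i+1$ the Maiden offers $\max[X_i]\cap[\max X_i(p),\omega)$ and Death picks $b_i$. Since the Maiden has no winning strategy there (Corollary~1.3 of \cite{shelah1998distributivity}, where non-isomorphism of $\mathcal U_0,\mathcal U_1$ is used), Death can play so that $\min[X]\in\mathcal U_0$ and $\max[X]\in\mathcal U_1$, i.e.\ $X\in\mathrm{FIN}^{[\infty]}(\bar{\mathcal U})$.

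Clause (iv) is the heart. Given $X\in\mathrm{FIN}^{[\infty]}(\bar{\mathcal U})$ and $c:[X]\to 2$, I would first produce an idempotent ultrafilter $p$ on the partial semigroup $(\mathrm{FIN},\cup)$ (where $s\cup t$ is defined for $s<_b t$) with $[X]\in p$, $\min(p)=\mathcal U_0$ and $\max(p)=\mathcal U_1$, writing $\min(p)=\{A:\{s:\min s\in A\}\in p\}$ and similarly for $\max$. Such $p$ exists: the set $T$ of ultrafilters with these three properties is closed in $\beta(\mathrm{FIN})$ (within the subsemigroup on which $\cup$ is total, to which all members of $T$ belong as $\mathcal U_0$ is non-principal); it is non-empty because $X\in\mathrm{FIN}^{[\infty]}(\bar{\mathcal U})$ makes $[X]\cap\{s:\min s\in A\}\cap\{s:\max s\in B\}$ non-empty for every $A\in\mathcal U_0$, $B\in\mathcal U_1$; and it is a subsemigroup, since $[X]$ is closed under $p,q\mapsto p\cdot q$ while a routine computation gives $\min(p\cdot q)=\min(p)$ and $\max(p\cdot q)=\max(q)$. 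So $T$ carries an idempotent $p$ by Ellis's theorem. Now run the Galvin--Glazer argument relative to $p$: one colour class $C=c^{-1}(\varepsilon)\cap[X]$ lies in $p$, put $C^\star=\{s\in C:s^{-1}C\in p\}\in p$ (with $s^{-1}C=\{t:s<_b t,\ s\cup t\in C\}$), and build a block sequence $Y\le X$ all of whose finite unions of blocks lie in $C^\star$. At each stage the set $D\in p$ of admissible next blocks satisfies, by idempotency, $D\supseteq\{s\cup u:s\in D^\flat,\ u\in s^{-1}D\}$ with $D^\flat=\{s:s^{-1}D\in p\}\in p$; hence choosing the next block $Y(i)=s\cup u$ amounts to first choosing $\min Y(i)=\min s$ in $\min[D^\flat]\in\mathcal U_0$ and then $\max Y(i)=\max u$ in $\max[s^{-1}D]\in\mathcal U_1$ --- and the latter set lies in $\mathcal U_1$ whichever $s$ was taken. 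Thus the min/max data of $Y$ is again produced by a play of $G(\bar{\mathcal U})$ ($n=2$); as the Maiden has no winning strategy, Death can ensure $\min[Y]\in\mathcal U_0$ and $\max[Y]\in\mathcal U_1$, i.e.\ $Y\in\mathrm{FIN}^{[\infty]}(\bar{\mathcal U})$, while $c\upharpoonright[Y]\equiv\varepsilon$.

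The step I expect to be the genuine obstacle is clause (iv), and within it the simultaneous control of $\min[Y]$ and $\max[Y]$ through the Galvin--Glazer recursion: this is the only place where one truly needs $\mathcal U_0\not\cong\mathcal U_1$, and the role played there by the Shelah--Spinas game is precisely analogous to its role in Section~2. Clause (iii) uses the same game but no Hindman-type input, and clauses (i)--(ii) become purely formal once the reformulation of $\mathrm{FIN}^{[\infty]}(\bar{\mathcal U})$ is in hand.
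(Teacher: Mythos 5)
This theorem is imported from Mildenberger \cite{mildenberger2024exactly} and is not proved in the paper, so there is no in-paper argument to compare against; judged on its own, your proposal is a correct and essentially complete proof, and it follows the same overall strategy as the original: reduce to the characterization $X\in\mathrm{FIN}^{[\infty]}(\bar{\mathcal U})\Leftrightarrow\min[X]\in\mathcal U_0$ and $\max[X]\in\mathcal U_1$ (your interleaving argument for the nontrivial direction is right, and it also explains why the paper only records the easy implication), dispose of clauses (i)--(ii) formally, and use the Shelah--Spinas game to coordinate the $\min$- and $\max$-sides of the fusions in (iii) and (iv). For clause (iv), your route -- Ellis's lemma applied to the closed subsemigroup $T=\{p : [X]\in p,\ \min(p)=\mathcal U_0,\ \max(p)=\mathcal U_1\}$, whose non-emptiness is exactly where $X\in\mathrm{FIN}^{[\infty]}(\bar{\mathcal U})$ enters, followed by Galvin--Glazer with each new block split as $s\cup u$ so that Death can steer $\min(s\cup u)=\min s$ into a prescribed $\mathcal U_0$-set and $\max(s\cup u)=\max u$ into a prescribed $\mathcal U_1$-set -- is sound; the identities $\min(p\cdot q)=\min(p)$ and $\max(p\cdot q)=\max(q)$ are the right verification that $T$ is a subsemigroup. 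Two points deserve an explicit sentence in a write-up: first, the product is only associative and total on the compact set of ultrafilters containing every $\{s:\min s>n\}$, so one should note that all of $T$ lies there because $\mathcal U_0$ is nonprincipal (you say this parenthetically); second, the auxiliary choices in the recursion (a concrete $s\in D^\flat$ realizing the minimum Death just played, a concrete $u\in s^{-1}D$ realizing the maximum, and the recomputation of $D$) must be folded into the Maiden's strategy so that Corollary~1.3 of Shelah--Spinas applies -- routine, but it is the hinge of the argument. Finally, your closing remark that clause (iv) is ``the only place where one truly needs $\mathcal U_0\not\cong\mathcal U_1$'' is inconsistent with your own proof: your clause (iii) also invokes the game, hence non-isomorphism, and it is not clear that a coordinate-wise P-point argument alone would produce a single block sequence witnessing both projections simultaneously.
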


By a standard density argument, if $\eta$ is a generic block sequence for the forcing $\mathbb{MM}(\mathrm{FIN}^{[\infty]}(\bar{\mathcal{U}}))$, then $\min[\eta]$ pseudo-intersects $\mathcal{U}_0$, and $\max[\eta]$ pseudo-intersects $\mathcal{U}_1$. 

As in the proof of the main theorem in the case $n=2$, we start with a pair of non-isomorphic Ramsey ultrafilters $\bar{\mathcal{U}}^0=\langle \mathcal{U}^0_0, \mathcal{U}^0_1 \rangle$, whose existence is guaranteed by the \textsf{CH}. We define the countable support iteration $\mathbb{P}_{\omega_2}=\langle \mathbb{P}_{\alpha}, \undertilde{\mathbb{MM}}(\mathrm{FIN}^{[\infty]}(\bar{\mathcal{U}}^{\alpha})) : \alpha<\omega_2\rangle$, where for each $\xi < \omega_2: \mathbb{P}_{\xi}\Vdash \undertilde{\mathcal{U}}^{\xi}_i\supseteq \bigcup_{\iota<\xi} \undertilde{\mathcal{U}}^{\iota}_i$, for $i\in\{0,1\}$. Again, the existence of such Ramsey ultrafilters is guaranteed by the \textsf{CH} at each intermediate stage. We show that for any generic $G\subseteq \mathbb{P}_{\omega_2}$, $$\mathcal{U}^{\omega_2}_0=\bigcup_{\xi<\omega_2} \undertilde{\mathcal{U}}_0^{\xi}[G|_{\xi}] \text{ and } \mathcal{U}^{\omega_2}_1=\bigcup_{\xi<\omega_2}\undertilde{\mathcal{U}}_1^{\xi}[G|_{\alpha}]$$ are, up to isomorphism, the only $Q$-points in $\mathbf{V}[G]$. As in the previous section, it suffices to prove the following proposition:

\begin{prp}\label{prop:destruction2}
Let $E$ be a $Q$-point and $\bar{\mathcal{U}}=\langle \mathcal{U}_0, \mathcal{U}_1\rangle$ a pair of non-isomorphic Ramsey ultrafilters such that $E$ is not isomorphic to $\mathcal{U}_0$ or $\mathcal{U}_1$. Write $\mathcal{H}:=\mathrm{FIN}^{[\infty]}(\bar{\mathcal{U}})$. Let $\undertilde{R}$ be a $\mathbb{MM}(\mathcal{H})$-name such that $\mathbb{MM}(\mathcal{H}) \Vdash ``\undertilde{R}$ has the $h$-Laver property". Then $\mathbb{MM}(\mathcal{H}) \ast \undertilde{R}\Vdash ``E$ cannot be extended to a $Q$-point".
\end{prp}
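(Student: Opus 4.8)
The plan is to mirror exactly the structure used in Section~2 for Proposition~\ref{prop:destruction}, with the role of the $n$-dimensional Mathias forcing $\forcing$ played by $\mathbb{MM}(\mathcal{H})$ and the two "tracks" $\eta_0,\eta_1$ of the generic replaced by $\min[\eta]$ and $\max[\eta]$, which pseudo-intersect $\mathcal{U}_0$ and $\mathcal{U}_1$ respectively. Suppose toward a contradiction that $\langle p,\undertilde q\rangle\in\mathbb{MM}(\mathcal{H})\ast\undertilde R$ forces that $E$ extends to a $Q$-point; then there is a $\mathbb{MM}(\mathcal{H})\ast\undertilde R$-name $\undertilde a$ for a strictly increasing function in $\func$ with $\undertilde a(i)\in(\underaccent{\dot}{\nu}(i-1),\underaccent{\dot}\nu(i)]$, where $\underaccent{\dot}\nu$ enumerates the guiding $Q$-point-like set coming from $E$ and the generic block sequence — concretely, I would let $\underaccent{\dot}\nu$ enumerate $\max[\eta]$ (or whichever single set controls the interval structure), exactly as $\underaccent{\dot}\eta$ is used in Section~2. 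Applying the $h$-Laver property of $\undertilde R$ (hypothesis of the proposition) we extract, below some $\langle p',\undertilde q'\rangle$, a $\mathbb{MM}(\mathcal{H})$-name $\undertilde C$ for a subset of $\omega$ with $\undertilde a(i)\in\undertilde C$ and $|\undertilde C\cap\underaccent{\dot}\nu(i)^+|\le h(i)$ for all $i$; here $h$ replaces $2^i$, but since $h$ is a fixed ground-model function this is harmless.

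The core of the argument is to establish the two combinatorial lemmas analogous to Lemma~\ref{lem:deciding} and Lemma~\ref{lem:close} for $\mathbb{MM}(\mathcal{H})$. For the analog of Lemma~\ref{lem:deciding}: given $\langle a,X\rangle$ and a name $\undertilde C$, first use the pure decision property (Theorem~10 of Calderon--Di Prisco--Mijares) repeatedly, together with the Ramsey/Taylor-type partition properties of the Matet-adequate family $\mathcal{H}$ from Lemma~\ref{Matetadequateproperties}, to thin $X$ to $X^\ast\in\mathcal{H}|X$ so that for every $b\in[X^\ast]^{[<\infty]}$ the value $\undertilde C\cap(\text{something determined by }b)$ is decided by $\langle a^\smallfrown b, X^\ast/b\rangle$, and moreover the decided initial segment depends only on the part of $b$ below the relevant threshold — the "coherence" clause. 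The right bookkeeping parameter here is $d_{X^\ast}(b)$ and $\max(b(|b|-1))$, replacing $\max t$. The place where Section~2 plays the Shelah--Spinas game $G(\bar{\mathcal U})$ is exactly where I would instead invoke the fusion/diagonal-intersection apparatus of Matet-adequate families: Lemma~\ref{Matetadequateproperties}(ii) gives the diagonal intersection that lets all the finitely-many decisions at "level $n+1$" be amalgamated into a single $Y\in\mathcal{H}$, just as in the proof of Lemma~\ref{lemma:Laver property} above. For the analog of Lemma~\ref{lem:close}, I would run the same elementary-submodel argument: take a countable $N\prec\mathcal H_\chi$ with an increasing finite filtration $N_i$ with $k_i=N_i\cap\omega$, after a preliminary thinning of $X$ making the growth of $\min[X],\max[X]$ outpace $h$; the key definability step — that if $\langle a^\smallfrown b, Y\rangle\Vdash m\in\undertilde C$ with $m$ below the last block of $b$ and no element of $[X^\ast]$ between them, then $\max(b(|b|-1))$ is definable from $m$ — goes through verbatim, and the witness-extraction formula is the obvious translation, using the coherence clause to force $D^\dagger=D^\ast$ and contradict $m\in D^\ast$.

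Having these two lemmas, the proof of Proposition~\ref{prop:destruction2} finishes exactly as that of Proposition~\ref{prop:destruction}: Lemma~\ref{lem:close}-analog yields $p^\ast\le_{\mathbb{MM}(\mathcal{H})}p'$ and an interval partition $\{[k_i,k_{i+1})\}$ such that $p^\ast$ forces that eventually $\undertilde C$ — hence $\operatorname{ran}(\undertilde a)$ — meets an interval only if $\min[\eta]$ or $\max[\eta]$ meets an adjacent interval. But since $E$ is non-isomorphic to both $\mathcal U_0$ and $\mathcal U_1$, the standard argument (\cite[Lemma~2.8]{halbeisen2025there}) produces $A\in\mathcal U_0$, $B\in\mathcal U_1$ and $v\in E$ such that $A$, $B$, and $v$ pairwise fail to meet adjacent intervals of the partition; choosing $Y\le X$ in $\mathcal H$ with $\min[Y]\subseteq A$ and $\max[Y]\subseteq B$ (possible by definition of $\mathrm{FIN}^{[\infty]}(\bar{\mathcal U})$) and strengthening $p^\ast$ below $\langle a,Y\rangle$ forces $\operatorname{ran}(\undertilde a)$ almost disjoint from $v$, contradicting $v\in E$ and $E\subseteq$ the extending ultrafilter. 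The main obstacle I anticipate is purely technical: in the two-dimensional block-sequence setting the generic supplies \emph{two} correlated sets $\min[\eta]$ and $\max[\eta]$ rather than $n$ independent tracks, so some care is needed in the Lemma~\ref{lem:deciding}-analog to choose which of these (or their union, i.e.\ $\bigcup_i\eta(i)$) plays the role of $\bigcup_{k\in n}\bar X^\ast(k)$ and to make the coherence clause and the final "non-adjacency" argument match up — but since the definition of $\mathrm{FIN}^{[\infty]}(\bar{\mathcal U})$ is designed precisely to let us control $\min$ and $\max$ simultaneously inside a condensation, this should be a bookkeeping issue rather than a conceptual one.
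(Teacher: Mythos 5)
Your overall plan coincides with the paper's: extract $\undertilde{C}$ via the $h$-Laver property, prove a deciding lemma and an interval-partition lemma for $\mathbb{MM}(\mathcal{H})$ by the elementary-submodel filtration, and finish as in Proposition~\ref{prop:destruction}. However, the step you dismiss as "bookkeeping" is where the real content sits, and as written your final step has a genuine gap. The interval-partition lemma one can actually prove in this setting (and the one the paper proves) concludes that $\undertilde{C}$ meets an interval only if $\bigcup\dot{\eta}$ --- the set of \emph{all} elements of the generic blocks, interior points included --- meets an adjacent interval; the delicate case in its proof is a forced element of $\undertilde{C}$ lying strictly between $\min$ and $\max$ of the last stem block, and there one only gets that \emph{some} element of that block is nearby, not its $\min$ or $\max$. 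Consequently, in the concluding argument you must produce $Y\in\mathcal{H}|X$ and $v\in E$ such that $\bigcup Y$ and $v$ never meet adjacent intervals. Taking $A\in\mathcal{U}_0$, $B\in\mathcal{U}_1$, $v\in E$ pairwise non-adjacent and then any $Y$ with $\min[Y]\subseteq A$, $\max[Y]\subseteq B$ does not achieve this: the blocks of $Y$ are unions of blocks of $X$, so their interiors contain points of $\bigcup X$ lying in neither $A$ nor $B$, and these can fall into intervals adjacent to intervals met by $v$. The paper closes exactly this hole with Lemma~\ref{lemma-final} (a modified Lemma~5.10 of Mildenberger): one first condenses $X$ so that each block-span swallows a whole interval and no interval meets two blocks, then uses two finite-to-one maps built from the block endpoints together with $E\ncong\mathcal{U}_0,\mathcal{U}_1$ to choose $W\in E$, $A\in\mathcal{U}_0$, $B\in\mathcal{U}_1$ with disjoint images, and only then selects the blocks of $Y$. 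This is a genuine extra lemma, not a relabelling of the Section~2 argument.

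A second, smaller omission: your claim that the definability step of the Lemma~\ref{lem:close}-analog "goes through verbatim" overlooks the same inside-a-block case. To handle a decided element of $\undertilde{C}$ strictly inside the last block of the stem, the deciding lemma needs a \emph{second} coherence clause, for extensions that grow the last block of the stem (the $b^{+\smallfrown}t$ clause~(ii) of Lemma~\ref{lemma-deciding2}), in addition to the clause for extensions by a new block; your sketch only provides the latter. This is repairable by one more round of the Hindman-type partition property of Matet-adequate families (together with the diagonal intersections of Lemma~\ref{Matetadequateproperties}), but it must be built into the deciding lemma, since without it the submodel argument cannot locate an element of the last block in the interval succeeding the one containing the decided point.
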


Proposition~\ref{prop:destruction2} directly follows from the following three lemmas. The reader can check the proof of Proposition~\ref{prop:destruction} to see why these lemmas are enough to derive Proposition~\ref{prop:destruction2}.

\begin{lem}\label{lemma-deciding2}

Let $\undertilde{C}$ be a $\mathbb{MM}(\mathcal{H})$-name for a subset of $\omega$, and let $\langle a, X\rangle \in \mathbb{MM}(\mathcal{H})$ be an arbitrary condition. Then there is $X^*\in\mathcal{H}|X$, and for all $b\in[X^*]^{[<\infty]}$, some $C_b\in [\omega]^{<\omega}$ such that $\langle a^{\smallfrown}b, X^*/b\rangle \Vdash_{\mathbb{MM}(\mathcal{H})} \undertilde{C}\cap\max(b(|b|-1))^+=C_b$. Moreover, the following also hold:
\begin{enumerate}[label=(\roman*)]
    \item For all $b\in[X^*]^{[<\infty]}$, $s\in[X^*/b]$, and $t_0,t_1\in [X^*/s]$, we have $C_{b^{\smallfrown}t_0}\cap \max(s)^+=C_{b^{\smallfrown}t_1}\cap \max(s)^+$. \label{lemma-deciding2a}
    \item For all $b\in[X^*]^{[<\infty]}$, $s\in[X^*/b]$, and $t_0,t_1\in [X^*/s]$, we have $C_{b^{+\smallfrown}t_0}\cap \max(s)^+=C_{b^{+\smallfrown}t_1}\cap \max(s)^+$ (here, $b^{+\smallfrown}t_i:=\langle b(0),\ldots, b(|b|-1)\cup t_i\rangle$). \label{lemma-deciding2b}
\end{enumerate}


\end{lem}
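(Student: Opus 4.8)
The strategy is to imitate the proof of Lemma~\ref{lem:deciding}, using pure decision (relative to the Matet-adequate family, as in Calderon--Di Prisco--Mijares) together with the Ramsey-theoretic closure properties of $\mathcal{H}$ in place of the Shelah--Spinas game. First, I would secure the basic ``deciding'' conclusion without the two extra clauses: construct a $\leq$-descending chain $X_0 \geq X_1 \geq \cdots$ in $\mathcal{H}|X$ so that, at stage $n$, for every $b \in [X \upharpoonright (n+1)]^{[<\infty]}$ the condition $\langle a^{\smallfrown}b, X_{n+1}\rangle$ decides $\undertilde{C} \cap \max(b(|b|-1))^+$ with some finite value $C_b$; there are only finitely many such $b$ (bounded by $h(n+1)$), so finitely many applications of pure decision at each stage suffice. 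Then take $X' \in \mathcal{H}|X_0$ to be a diagonal intersection of $\langle X_n : n \in \omega\rangle$ (Lemma~\ref{Matetadequateproperties}(ii)); for $b \in [X']^{[<\infty]}$ with $d_{X'}(b) = n+1$ one checks $\langle a^{\smallfrown}b, X'/b\rangle \leq \langle a^{\smallfrown}b, X_{n+1}\rangle$, so the value $C_b$ is still forced. This is the routine part.

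The work is in arranging the coherence clauses \ref{lemma-deciding2a} and \ref{lemma-deciding2b}. Fix $X'$ as above. For a fixed ``base'' $b$ and a fixed $s \in [X'/b]$, consider the coloring that sends a block sequence $t \leq X'/s$ (say of bounded length, or really one works one ``next block'' at a time as in Lemma~\ref{lem:deciding}) to the finite set $C_{b^{\smallfrown}t} \cap \max(s)^+ \subseteq \max(s)^+$; this takes finitely many values, so by the Taylor/Milliken partition property of $\mathcal{H}$ (Lemma~\ref{Matetadequateproperties}(i), extended to finitely many colors) there is a condensation on which it is constant. The point is that $\max(s)^+$ is fixed once $b$ and $s$ are fixed, so the relevant color set is genuinely finite. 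One then has to do this simultaneously for all $b$ and all $s$: I would enumerate the countably many pairs $(b,s)$ with $s \in [X'/b]$, and build a further descending chain of condensations in $\mathcal{H}$, homogenizing the $(b_i,s_i)$-coloring at step $i$ on $X'_i/s_i$ (this only constrains blocks lying above $\max(s_i)$, so it is compatible with a diagonalization), then take another diagonal intersection $X^* \in \mathcal{H}|X'$. Clause \ref{lemma-deciding2b} is handled identically, except the coloring is applied to $t \mapsto C_{b^{+\smallfrown}t} \cap \max(s)^+$, i.e. to block sequences extending the last block of $b$ rather than appended after it; one must be slightly careful that ``$b^{+\smallfrown}t$'' is still a legitimate block sequence condensing $X^*$, which follows since $t \in [X^*/s]$ and $s \in [X^*/b]$.

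The main obstacle is bookkeeping: ensuring that homogenizing the $(b,s)$-colorings for all countably many pairs can be done by a single diagonalization inside $\mathcal{H}$, i.e. that each homogenization step only restricts the part of the block sequence above $\max(s)$ and therefore does not undo the already-decided values $C_b$ or earlier homogenizations. This is where one leans on the fact that in a diagonal intersection $X^*$, for $u \in [X^*]$ one has $X^*/u \leq X_{\max(u)+1}$, so tail-behavior is governed by a single earlier stage; combined with closure of $\mathcal{H}$ under finite changes and $\leq^*$-upward closure, this lets the finitely many ``early'' blocks be left alone while the colorings are homogenized on tails. Once the diagonalization is set up correctly, verifying \ref{lemma-deciding2a} and \ref{lemma-deciding2b} for a given triple $(b,s,t_0,t_1)$ is immediate: both $C_{b^{\smallfrown}t_0} \cap \max(s)^+$ and $C_{b^{\smallfrown}t_1} \cap \max(s)^+$ equal the constant color value of the $(b,s)$-coloring on $[X^*/s]$, since $t_0, t_1 \leq X^*/s$.
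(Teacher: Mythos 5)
Your proposal is correct and follows essentially the same route as the paper: pure decision along a descending chain plus a diagonal intersection for the basic deciding claim, then homogenization of the finite-range colorings $t \mapsto C_{b^{\smallfrown}t}\cap \max(s)^+$ (and the $b^{+\smallfrown}t$ variant) on single blocks via the Hindman-type partition property of $\mathcal{H}$, followed by a further diagonal intersection. The only organizational difference is that the paper groups your pairs $(b,s)$ by the index $n$ of the top block of $s$ --- since $\max(s)^+=\max(X'(n))^+$ depends only on that index --- which makes compatibility with the diagonal intersection automatic rather than a separate bookkeeping step.
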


\begin{proof}
Suppose $\undertilde{C}$ and $\langle a, X\rangle$ are as in the statement of the lemma. The proof proceeds similarly to the proof of Lemma~\ref{lemma:Laver property}. By pure decision, we find $C_{\varnothing}\subseteq \max(a(|a|-1))$ and some $X_0\leq X$ such that $\langle a, X_0\rangle \Vdash_{\mathbb{MM}(\mathcal{H})} \undertilde{C}\cap  \max(a(|a|-1))=C_{\varnothing}$.

Assuming that $X_0\geq X_1\geq\ldots\geq X_{n-1}$ have been constructed, first, define $Y_0=X_{n-1}/X(n-1)$. Let $\langle b_i : i\in m\rangle$ list $\{b\in[X\upharpoonright n]^{[<\infty]} : d_X(b)=n-1\}$, for some $m\in\omega$. Given $Y_i$ for $i\in m$, by pure decision, define $C_{b_i}$ and $Y_{i+1}\in \mathcal{H}|Y_i$ so that $\langle a^{\smallfrown} b_i, Y_{i+1}\rangle \Vdash_{\mathbb{MM}(\mathcal{H})} \undertilde{C}\cap \max(b_i(|b_i|-1))= C_{b_i}$. Finally, define $X_n=Y_m$ and proceed the induction. By Lemma~\ref{Matetadequateproperties}, we find some diagonal intersection $X'\in \mathcal{H}|X_0$ of the sequence $\langle X_0, X_1,\ldots\rangle$. Then $\langle a, X'\rangle$ satisfies the first part of the statement.

In order to satisfy \ref{lemma-deciding2a} and \ref{lemma-deciding2b}, let $X'_0=X'$ and assume that $X'_0\geq X'_1\geq\ldots\geq X'_{n-1}$ have been constructed. Let $\langle b_i : i\in m'\rangle$ list $[X'\upharpoonright n]^{[<\infty]}$, for some $m'\in\omega$. Set $Y'_0=X'_{n-1}/X'(n-1)$, and given $Y'_i$ for $i\in m'$, define a coloring $c_{b_i}: [Y'_i/X'(n)]\to \ran(c_{b_i})$ by $c_{b_i}(t)=C_{b_i^{\smallfrown}t}\cap \max(X'(n))^+$, for $t \in [Y'_i/X'(n)]$. Note that $\ran(c_{b_i})$ is finite, so by definition of a Matet-adequate family, we can find $Y'_{i+1}\in\mathcal{H}|Y'_i$ such that $c_{b_i}\upharpoonright [Y'_{i+1}]$ is constant. At the end, we let $X'_n=Y'_{m'}$, and find $X''\in\mathcal{H}|X'_0$ which is a diagonal intersection for the sequence $\langle X'_0, X'_1,\ldots\rangle$. To finish, we do the same construction as above to $X''$, the only difference being in the definitions of the colorings $c_{b_i}$; for which we use $C_{b_i^{+\smallfrown}t}$ instead of $C_{b_i^{\smallfrown}t}$, and define $X^*\in\mathcal{H}$ to be a diagonal intersection of the relevant sequence.
\end{proof}

\begin{lem}
Let $\undertilde{C}$ be a $\forcing$-name for a subset of $\omega$ and let $\langle a,  X\rangle \in \mathbb{MM}(\mathcal{H})$ be such that $\langle a, X\rangle \Vdash_{\mathbb{MM}(\mathcal{H})} \forall i \in \omega: |\undertilde{C} \cap (\max(\dot{\eta}(i-1)),\max{\dot{\eta}(i)}]|\leq h(i)$.
	
There exists an interval partition $\{[k_i, k_{i+1}): i \in \omega\}$ of $\omega$ and a strengthening of $\langle a, X\rangle$ which forces that
		  $$\forall i \in \omega\setminus \{0,1\} :(\undertilde{C}\setminus\max(a)^+)\cap [k_{i-1}, k_i)\neq \varnothing \Rightarrow \begin{cases}
        \left(\bigcup \dot{\eta}\right)\cap [k_{i-2}, k_{i-1})\neq \varnothing, & \text{or} \\ \left(\bigcup \dot{\eta}\right)\cap [k_{i-1}, k_i)\neq \varnothing, & \text{or} \\ \left(\bigcup \dot{\eta}\right)\cap [k_i, k_{i+1})\neq \varnothing.
    \end{cases}$$
\end{lem}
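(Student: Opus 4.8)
The argument is the block-sequence analogue of Lemma~\ref{lem:close}, so I would follow the same skeleton. First, by Lemma~\ref{lemma-deciding2}, strengthen $\langle a, X\rangle$ to $\langle a, X^*\rangle$ with $X^*\in\mathcal{H}|X$ so that every $b\in[X^*]^{[<\infty]}$ comes with a decided finite set $C_b=\undertilde{C}\cap\max(b(|b|-1))^+$, and so that properties \ref{lemma-deciding2a} and \ref{lemma-deciding2b} hold. As a preliminary thinning, pass to a further condensation (and lengthen $a$ by one block) so that the cardinality bound $h$ grows slowly relative to the blocks of $X^*$: concretely, arrange that if $u$ is the $n$-th element (in $<_b$ order) of $\bigcup[X^*]$, listed by $\max$, then $h$ evaluated at the index of the block of $X^*$ containing $u$ is small compared to $\min(u)$ — this is exactly the role played by the ``$j$-th element greater than $2^{j+1}$'' thinning in Lemma~\ref{lem:close}, and it guarantees that the relevant cardinality parameter $\gamma$ lands in the correct piece $N_{i^*}$ of the elementary-submodel filtration.

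Next, build the interval partition exactly as before: take a countable elementary submodel $N\prec\mathcal{H}_\chi$ containing $\undertilde{C}$, $X^*$, $\bar{\mathcal{U}}$, $h$, and all other relevant parameters, and write $N=\bigcup_i N_i$ as an increasing union of finite subsets, sufficiently closed, with $k_i:=N_i\cap\omega\in\omega$, and with the property that every short formula ($<8400$ symbols, say) with parameters in $N_i$ that $N$ believes has a witness gets a witness in $N_{i+1}$. The claim is that $\{[k_i,k_{i+1}):i\in\omega\}$ works. To verify it, suppose $\langle a^{\smallfrown}b, X'\rangle\leq\langle a, X^*\rangle$ and $x\in\omega\setminus\max(a)^+$ are such that $\langle a^{\smallfrown}b, X'\rangle\Vdash x\in\undertilde{C}$. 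Using Lemma~\ref{lemma-deciding2} (and extending $b$ if necessary) I may assume $X'=X^*/b$ and that $x$ lies in the half-open interval determined by the last two blocks $u_0<_b u^*$ of $b$, i.e.\ $\max(u_0)<x\le\max(u^*)$. If there is no element of $\bigcup[X^*]$ strictly between $x$ and $\max(u^*)$, then $\max(u^*)$ is definable from $x$ over the parameters in $N$, so $\max(u^*)\in N_{i^*+1}$ once $x\in[k_{i^*},k_{i^*+1})$, and we are done since $\max(u^*)\in\bigcup\dot\eta$ forces itself into one of the three adjacent intervals. Otherwise there is such an $m^*$ with $\max(u_0)<x<m^*<\max(u^*)$.

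In the remaining case, set $b':=b$ with its last block $u^*$ removed, and $D^*:=C_{b'^{\smallfrown}\langle u^*\rangle}\cap(\max(u_0),m^*)$; then $x\in D^*$ and $\gamma:=|D^*|\le h(\text{index of }u^*)$, which by the preliminary thinning lies in $N_{i^*}$ provided $b'$ (hence $u_0$, hence $k_{i^*}=N_{i^*}\cap\omega$) is in $N_{i^*}$. Now $N$ satisfies the existential formula asserting: there exist a block $u'$, a block $v'\in X^*$ with the same index parity/position constraint forcing $v'$ to be a legal last block after $b'$, with $u'<_b v'$ and both $>_b u_0$, and a set $D'\subseteq(\max(u_0),\max(u'))$ with $|D'|=\gamma$, such that $\langle a^{\smallfrown}b'^{\smallfrown}\langle v'\rangle, X^*/v'\rangle\Vdash\undertilde{C}\cap(\max(u_0),\max(u'))=D'$. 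This formula has a witness in $N_{i^*+1}$, call it $\langle u^\dagger, v^\dagger, D^\dagger\rangle$; I claim $\max(u^\dagger)>x$. If not, then $\max(u^\dagger)\le x<m^*$, so by Lemma~\ref{lemma-deciding2}\ref{lemma-deciding2a} (agreement of the $C_{b'^{\smallfrown}t}$ below $\max(s)$ for the relevant middle segment $s$) the two conditions extending $\langle a^{\smallfrown}b', X^*\rangle$ via $v^*$ and via $v^\dagger$ agree on $\undertilde{C}\cap(\max(u_0),\max(u^\dagger))$, forcing $D^\dagger\subseteq D^*$, hence $D^\dagger=D^*$ since both have size $\gamma$ — contradicting $x\in D^*\setminus D^\dagger$ (as $x\ge\max(u^\dagger)$ while $D^\dagger\subseteq(\max(u_0),\max(u^\dagger))$). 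So $\max(u^\dagger)\in\bigcup\dot\eta$ is a witness, in $N_{i^*+1}$, to one of the three adjacent intervals meeting $\bigcup\dot\eta$.

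\textbf{Main obstacle.} The delicate point is bookkeeping the parity/positional constraint on which blocks of $X^*$ may legally serve as the last block of an extension of $b'$, so that the witness formula is genuinely satisfiable inside $N$ and the two conditions compared at the end are both legitimate $\mathbb{MM}(\mathcal{H})$-conditions below $\langle a^{\smallfrown}b', X^*\rangle$; this is where part \ref{lemma-deciding2b} of Lemma~\ref{lemma-deciding2} (agreement for the ``fattened last block'' $b^{+\smallfrown}t$) is needed, to handle the case where the witness $v^\dagger$ should be absorbed into the last block of $b'$ rather than appended as a new block. Making the preliminary thinning precise enough that $\gamma\in N_{i^*}$ — i.e.\ choosing the growth rate of the blocks of $X^*$ against $h$ correctly — is the other thing to get right, but it is routine once the role of $\gamma$ is identified.
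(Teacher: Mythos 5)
Your skeleton (preliminary thinning against $h$, Lemma~\ref{lemma-deciding2}, the elementary submodel filtration $N_i$ with $k_i=N_i\cap\omega$, and the witness/agreement argument mirroring Lemma~\ref{lem:close}) is the same as the paper's, and your treatment of the case where $x$ sits strictly between the last two blocks of the stem and a marker block of $X^*$ is available below $\min(u^*)$ is essentially correct. The gap is the one case that is genuinely new in the block-sequence setting, which the paper singles out as the only point requiring extra care: when the forced element $x$ of $\undertilde{C}$ lies strictly inside the span of the last block, $\min(u^*)<x<\max(u^*)$ (and one must produce an element of $u^*\subseteq\bigcup\dot\eta$ in an interval adjacent to that of $x$). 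There your argument breaks down: your existential formula requires a marker $u'$ with $u'<_b v'$ and $D'\subseteq(\max(u_0),\max(u'))\ni x$, so any witness needs $\max(u')>x>\min(u^*)$, and then the true configuration $v'=u^*$ is not a legal witness (it is not $>_b u'$), so satisfiability of the formula in $N$ is unjustified; worse, the final agreement step via item \ref{lemma-deciding2a} needs some $s\in[X^*/b']$ with $\max(s)\geq\max(u^\dagger)>x>\min(u^*)$ and simultaneously $u^*\in[X^*/s]$, i.e.\ $\min(u^*)>\max(s)$, which is impossible. So item \ref{lemma-deciding2a} simply cannot give the needed agreement in this case.

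What is required here is exactly item \ref{lemma-deciding2b}: write $b=(b^-)^{+\smallfrown}t$, where $b^-$ truncates the last block to its part at or below (the interval of) $x$ and $t$ is the remaining upper part of $u^*$, and run the witness argument over alternative fattenings $(b^-)^{+\smallfrown}t^\dagger$; item \ref{lemma-deciding2b} provides the agreement of $C_{(b^-)^{+\smallfrown}t_0}$ and $C_{(b^-)^{+\smallfrown}t_1}$ below the marker, and the resulting contradiction shows that the least element of $t$ (an element of $b(|b|-1)$, hence of $\bigcup\dot\eta$) lies in the interval immediately succeeding that of $x$. Your ``Main obstacle'' paragraph gestures at \ref{lemma-deciding2b} but misidentifies the issue: there is no parity or positional constraint on admissible last blocks in $\mathbb{MM}(\mathcal{H})$ (that was an artefact of the $n$-dimensional forcing of Section~2), and the problem is not that the witness ``should be absorbed into the last block of $b'$'' but that the element of $\undertilde{C}$ itself sits inside the last block, forcing the comparison to be between fattenings rather than between appended blocks. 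A smaller point: in your existential formula the appended block should range over $[X^*/u']$ (finite unions of blocks of $X^*$), not over single blocks of $X^*$, since otherwise the actual configuration $v'=u^*$ need not be a witness and satisfiability again becomes unclear.
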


\begin{proof}
The proof is analogous to the proof of Lemma~\ref{lem:close}: Without loss of generality, we assume that the minimum of the $j$'th element of $a^{\smallfrown}X$ is greater than $h(|a|+j+1)$, let $X^*\in\mathcal{H}|X$ be as given by Lemma~\ref{lemma-deciding2}, and define the interval partition via finite approximations of a certain countable elementary submodel of $\mathcal{H}_{\chi}$. The only case that requires some additional care is proving that if $\langle a^{\smallfrown}b, X^{\ast}/b\rangle \Vdash_{\mathbb{MM}(\mathcal{H})} n \in \undertilde{C}$, where $\min(b(|b|-1)) < n < \max(b(|b|-1))$, then some element of $b(|b|-1)$ lies in the interval immediately succeeding the interval containing $n$. However, this case is easily handled by item (ii) in the conclusion of Lemma~\ref{lemma-deciding2}.
\end{proof}

The final ingredient is the following slightly modified version of Lemma~5.10 of \cite{mildenberger2024exactly}.

\begin{lem}\label{lemma-final}

Suppose $\{[k_i, k_{i+1})\}_{i\in\omega}$ is an increasing interval partition of $\omega$, and $\mathcal{W}\ncong\mathcal{U}_0,\mathcal{U}_1$ is a $Q$-point. Then for all $X\in\mathcal{H}$, there are $Y\in\mathcal{H}|X$ and $W\in\mathcal{W}$ such that $\bigcup Y\cap [k_{i-1}, k_i)\neq\varnothing\Rightarrow \begin{cases}
    W\cap [k_{i-2}, k_{i-1})=\varnothing, & \text{and} \\ W\cap[k_{i-1}, k_i)=\varnothing, & \text{and} \\ W\cap [k_i, k_{i+1})=\varnothing.

\end{cases}$

\end{lem}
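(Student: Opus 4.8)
The strategy is to reduce the statement to a coloring argument on the index set of the interval partition, exploiting that a $Q$-point ``selects'' one point from each block of any interval partition and that non-isomorphic Ramsey ultrafilters can be separated by finite-to-one maps. First I would pass to the coarser interval partition obtained by grouping consecutive blocks in triples, say $I_m := [k_{3m}, k_{3m+3})$, so that the three-interval ``adjacency'' condition becomes a statement about a single coarsened interval and its two neighbors; this is the standard bookkeeping trick from Lemma~5.10 of \cite{mildenberger2024exactly}, and it costs nothing since refining or coarsening the partition only changes the statement up to a finite shift in indices.

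Next I would use the hypothesis that $\mathcal{W}$ is a $Q$-point together with $\mathcal{W} \ncong \mathcal{U}_0, \mathcal{U}_1$. Since $\mathcal{W}$ is a $Q$-point, for the interval partition $\{I_m\}$ there is a selector $W_0 \in \mathcal{W}$ meeting each $I_m$ in at most one point; let $g : \omega \to \omega$ be the finite-to-one map sending everything in $I_m$ to $m$, so $g$ is finite-to-one and $g(\mathcal{W})$ is an ultrafilter on $\omega$. Likewise, for $X \in \mathcal{H}$, recall $\min[X] \in \mathcal{U}_0$ and $\max[X] \in \mathcal{U}_1$; applying $g$ to these gives $g(\mathcal{U}_0)$ and $g(\mathcal{U}_1)$. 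The key point is that because $\mathcal{W}$, $\mathcal{U}_0$, $\mathcal{U}_1$ are pairwise non-isomorphic (in fact non-nearly-coherent, being distinct $Q$-points / Ramsey), the ultrafilters $g(\mathcal{W})$, $g(\mathcal{U}_0)$, $g(\mathcal{U}_1)$ need not be distinct a priori — so instead I would argue directly: I want to find $A \in g(\mathcal{W})$ and, after condensing $X$ to some $Y \in \mathcal{H}|X$, to arrange that the set of $m$ with $\bigcup Y \cap I_m \neq \varnothing$ is disjoint from $A \cup (A-1) \cup (A+1)$, while $W := g^{-1}[A] \cap W_0 \in \mathcal{W}$. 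Equivalently, letting $B \subseteq \omega$ be \emph{any} set with $B \in g(\mathcal{W})$ and $\omega \setminus (B \cup (B \pm 1)) \notin$ the ``trace'' of the $\mathcal{H}$-condensation to be built, I win.

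The heart of the matter is thus: given the finite-to-one $g$, produce $B \subseteq \omega$ with $B \in g(\mathcal{W})$ such that $C := \omega \setminus (B \cup (B+1) \cup (B-1))$ is still ``$\mathcal{H}$-large'' in the sense that there is $Y \in \mathcal{H}|X$ with $\{ m : \bigcup Y \cap I_m \neq \varnothing \} \subseteq C$ — i.e., $Y$ lives entirely inside the blocks indexed by $C$. This is where the non-near-coherence of $\mathcal{W}$ from $\mathcal{U}_0, \mathcal{U}_1$ is used, via the following: since $\mathcal{W}$ is not nearly coherent to $\mathcal{U}_i$, for the finite-to-one $g$ we have $g(\mathcal{W}) \neq g(\mathcal{U}_i)$ unless we compose further; more usefully, by the standard characterization (cf. \cite{blass86nearcoherence}), there is a partition of $\omega$ into intervals such that $\mathcal{W}$ and $\mathcal{U}_i$ pick ``out of phase'' — I would invoke the separation argument of \cite[Lemma~2.8]{halbeisen2025there} / \cite[Lemma~5.10]{mildenberger2024exactly} to get a single set $B \in g(\mathcal{W})$ whose doubled complement $C$ is simultaneously in $g(\mathcal{U}_0)$ and in $g(\mathcal{U}_1)$ (or at least large enough with respect to both). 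Then, because $\min[X] \in \mathcal{U}_0$ and $\max[X] \in \mathcal{U}_1$, the set $g^{-1}[C]$ contains both a set in $\mathcal{U}_0$ and a set in $\mathcal{U}_1$; using the definition of $\mathcal{H} = \mathrm{FIN}^{[\infty]}(\bar{\mathcal{U}})$ and closure under condensations, I can extract $Y \leq X$ with $\min[Y] \subseteq g^{-1}[C]$ and $\max[Y] \subseteq g^{-1}[C]$, hence (since consecutive blocks' min and max pin down which $I_m$ a block can touch, after the initial thinning making blocks short relative to the partition) $Y$ only touches blocks $I_m$ with $m \in C$. Finally set $W := g^{-1}[B] \cap W_0 \in \mathcal{W}$; then $\bigcup Y \cap I_m \neq \varnothing$ forces $m \in C$, so $m, m-1, m+1 \notin B$, so $W \cap I_{m-1} = W \cap I_m = W \cap I_{m+1} = \varnothing$, which after unravelling the tripling is exactly the three-interval conclusion stated.

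\textbf{Main obstacle.} The delicate step is the second half of the previous paragraph: arranging a \emph{single} set $B \in g(\mathcal{W})$ whose ``fattened complement'' $C$ is large with respect to \emph{both} $g(\mathcal{U}_0)$ and $g(\mathcal{U}_1)$ simultaneously, and then transferring this back through $g^{-1}$ and through the definition of $\mathrm{FIN}^{[\infty]}(\bar{\mathcal{U}})$ to get a genuine condensation $Y \in \mathcal{H}$ that actually misses the forbidden blocks — not just has its min-set and max-set in the right place, but has \emph{every} element of every block inside the allowed indices. The latter requires care about the initial thinning (ensuring blocks are short compared to the $k_i$-mesh so that a block's footprint among the $I_m$'s is controlled by its min and max) and about the fact that the $Q$-point property of $\mathcal{W}$ must be applied to an interval partition \emph{refined enough} that the selector genuinely lands one point per block. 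I expect the non-near-coherence input to be quotable almost verbatim from \cite[Lemma~5.10]{mildenberger2024exactly} combined with \cite[Lemma~2.8]{halbeisen2025there}, so the real work is the combinatorial plumbing between the index level and the block-sequence level.
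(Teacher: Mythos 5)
There is a genuine gap, and it sits exactly where you flag the ``main obstacle''. Your plan needs a condensation $Y\leq X$ such that \emph{every} element of \emph{every} block of $Y$ lies in the union of the allowed coarse intervals $I_m$, $m\in C$, and you propose to get this by ``an initial thinning making blocks short relative to the partition''. But in $\mathrm{FIN}^{[\infty]}$ you can only pass to condensations: each block of $Y$ is a union of blocks of $X$, so blocks can only get \emph{longer}, never shorter, and no element interior to a retained block can ever be discarded. Moreover, membership in $\mathcal{H}=\mathrm{FIN}^{[\infty]}(\bar{\mathcal{U}})$ gives control only over $\min[Y]$ and $\max[Y]$ (via $\mathcal{U}_0$ and $\mathcal{U}_1$); it says nothing about the interiors of blocks. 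Since the coarse partition $\{I_m\}$ in your argument is fixed in advance (tripled $k$-intervals, chosen independently of the block structure of $X$), a block of $X$ may straddle intervals $I_m$ with $m$ adjacent to $B$, and there is no reason the blocks lying entirely inside $\bigcup_{m\in C}I_m$ are plentiful enough to assemble an element of $\mathcal{H}|X$ -- indeed nothing in the definition of $\mathcal{H}$ even guarantees infinitely many such blocks. So the step ``hence $Y$ only touches blocks $I_m$ with $m\in C$'' does not follow from placing $\min[Y]$ and $\max[Y]$ in $g^{-1}[C]$.

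The paper's proof resolves precisely this by going in the opposite direction: it first condenses to $X'\in\mathcal{H}|X$ whose blocks are \emph{long} relative to the $k_i$-partition (every $[\min X'(n),\max X'(n)]$ contains a whole interval $[k_{i-1},k_i)$, and no such interval meets two distinct blocks), and then builds the finite-to-one collapses \emph{from the block structure itself}: with $l_i=\max(X'(i))$ it takes the two staggered maps $h_0,h_1$ constant on $[l_{2n},l_{2n+2})$ and on $[l_{2n+1},l_{2n+3})$, and uses non-near-coherence of $\mathcal{W}$ with $\mathcal{U}_0,\mathcal{U}_1$ (this is where the $Q$-point hypothesis enters, upgrading non-isomorphism to non-near-coherence) to find $W\in\mathcal{W}$, $A\in\mathcal{U}_0$, $B\in\mathcal{U}_1$ with $A\subseteq\min[X']$, $B\subseteq\max[X']$ and $h_j[W]\cap(h_j[A]\cup h_j[B])=\varnothing$ for both $j$. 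Because each block of $X'$ sits inside a single $h_0$-interval or a single $h_1$-interval, and each block swallows a full $k$-interval, keeping exactly the blocks with $\min$ in $A$ or $\max$ in $B$ produces $Y$ whose entire span -- interiors included, plus the adjacent $k$-intervals -- is automatically avoided by $W$; the $\pm 1$ fattening that you try to handle at the level of $g(\mathcal{W})$ is absorbed by the staggering of $h_0$ and $h_1$. Your separation step (finding $B\in g(\mathcal{W})$ whose fattened complement is in both $g(\mathcal{U}_i)$) is fine in spirit and is the same standard non-near-coherence argument the paper quotes, and the $Q$-point selector $W_0$ you introduce is not actually needed; but without tying the coarse partition to the blocks of a suitably condensed $X'$, the block-footprint control that your argument hinges on cannot be achieved.
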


\begin{proof}

First, by Lemma~\ref{Matetadequateproperties}, find some $X'\in\mathcal{H}|X$ such that every $[\min X'(n), \max X'(n)]$ contains some interval $[k_{i-1}, k_i)$, and such that no interval is intersected by $X'(n)$ and $X'(m)$ for distinct $n$ and $m$. 
Next, define $l_0=0$, and $l_i=\max(X'(i))$ for all $0<i<\omega$. Define $h_0:\omega\to\omega$ by $h_0\upharpoonright [l_{2n}, l_{2n+2})=n$ for all $n \in \omega$. Define $h_1:\omega\to\omega$ by $h_1\upharpoonright [l_0, l_1)=0$ and $h_1\upharpoonright [l_{2n+1}, l_{2n+3})=n$ for all $n\in\omega$.

Find $W\in\mathcal{W}, A\in\mathcal{U}_0, B\in \mathcal{U}_1$ such that
\begin{enumerate}[label=(\roman*)]
    \item $A\subseteq \min[X'], B\subseteq \max[X']$,
    \item $h_0[W]\cap (h_0[A] \cup h_0[B])=\varnothing$,
    \item $h_1[W]\cap (h_1[A] \cup h_1[B])=\varnothing$.
\end{enumerate}
Let $\{n_i\}_{i\in\omega}$ increasingly enumerate those $n$ such that $\min(X'(n))\in A$ or $\max(X'(n))\in B$. Let $Y(i)=X'(n_i)$ for all $i\in\omega$. Then $Y\in\mathcal{H}$, and it is straightforward to check that $Y$ and $W$ are as desired.
\end{proof}

\begin{rmk}
Note that dominating reals are added at each stage of the iteration in the preceding proof, as well as in the iteration considered in the proof of the main theorem. Hence, both models satisfy $\mathfrak{b}=\mathfrak{d}=\mathfrak{c}=\omega_2$. Since $\mathfrak{b} \leq \mathfrak{u}$ is provable in \textsf{ZFC} (see Solomon~\cite{solomon1977families}), it follows that $\mathfrak{u}=\mathfrak{d}$ in both models, and hence they contain $2^{\mathfrak{c}}$-many near-coherence classes of ultrafilters by Banakh and Blass~\cite{banakh2006number}.
\end{rmk}

\begin{rmk}
Looking at the model for a unique $Q$-point in \cite{halbeisen2025there}, or at the model for exactly $n$ $Q$-points in the previous section, one might have the idea that after iteratively pseudo-intersecting a \emph{stable ordered-union ultrafilter} on $\mathrm{FIN}$ (see \cite{Blass} for the definition of such ultrafilters), one will obtain a model with exactly two $Q$-points: $\mathcal{U}^{\omega_2}_{\min}$ and $\mathcal{U}^{\omega_2}_{\max}$ -- the $\min$ and $\max$ projections of the final stable ordered union ultrafilter $\mathcal{U}^{\omega_2}$. This idea faces the following obstacle in practice: At stage $\xi+1$ of the iteration, there is a stable ordered union ultrafilter $\mathcal{U}^{\xi+1}$ extending $\mathcal{U}^{\xi}$, as well as $Q$-points $\mathcal{W}_\alpha$ for $\alpha \in 2^{\mathfrak{c}}$, such that each $\mathcal{W}_\alpha$ is not isomorphic to $\mathcal{U}^{\xi+1}_{\min}$ or $\mathcal{U}^{\xi+1}_{\max}$, but such that $\operatorname{core}(\mathcal{U}^{\xi+1})\subseteq \mathcal{W}_\alpha$ (see Eisworth~\cite{Eisworth} for the definition of the core). This follows by slightly adapting a construction due to Mildenberger~\cite[Theorem~2.1]{mildenberger2011milliken}. After forcing with $\mathbb{MM}(\mathcal{U}^{\xi+1})$, these $\mathcal{W}_{\alpha}$ may still be extendable to $Q$-points.

In this section, we instead used $\mathbb{MM}(\mathrm{FIN}^{[\infty]}(\bar{\mathcal{U}}))$. As is the case for Matet-forcing (see \cite{mildenberger2024exactly}), this forcing factors as a two-step iteration $\mathrm{FIN}^{[\infty]}(\bar{\mathcal{U}})*\mathbb{MM}(\undertilde{\mathcal{U}})$, where $\mathrm{FIN}^{[\infty]}(\bar{\mathcal{U}})$ adds a generic stable ordered-union $\undertilde{\mathcal{U}}$ with $\undertilde{\mathcal{U}}_{\min}=\mathcal{U}_0$ and $\undertilde{\mathcal{U}}_{\max}=\mathcal{U}_1$, and the second step of the iteration pseudo-intersects this stable ordered-union. In some sense, this is the most natural fix for the obstacle we described in the previous paragraph: For any $Q$-point $\mathcal{W}$ in the ground model with $\mathcal{W}\ncong \mathcal{U}_0, \mathcal{U}_1$, by Lemma~\ref{lemma-final}, the generic $\undertilde{\mathcal{U}}$ will satisfy $f(\operatorname{core}(\mathcal{\undertilde{U}}))\nsubseteq f(\mathcal{W})$ for any finite-to-one $f \in \func$; hence $\mathcal{W}$ will not be extendable to a $Q$-point after pseudo-intersecting the generic stable ordered-union $\undertilde{\mathcal{U}}$, by the same method as used in Section 3. 
\end{rmk}

\section{Open Problems}

\begin{problem}
Is it consistent that there are exactly $\aleph_0$-many $Q$-points?
\end{problem}
Note that a positive answer would require all but finitely many of these $Q$-points to be non-Ramsey, since for any countable set $\{\mathcal{U}_k : k\in \omega\}$ of pairwise non-isomorphic Ramsey ultrafilters, the ultrafilter 
\[\mathcal{U}_0\text{-}\sum_{k > 0}\mathcal{U}_k:= \{X \subseteq \omega \times \omega: \{k > 0:\{i \in \omega: \langle k,i\rangle \in X\}\in \mathcal{U}_k\}\in \mathcal{U}_0\}\]
on $\omega \times \omega$ is a non-Ramsey $Q$-point (see \cite[Section 2.1]{blass2015next}). Hence, by partitioning $\{\mathcal{U}_k : k>0\}$ into an almost disjoint family of size $\mathfrak{c}$, each $\mathcal{U}_0$-indexed sum over one of the $\mathfrak{c}$-many pieces yields a $Q$-point, and these will be pairwise non-isomorphic by an old result due to Rudin (\cite{rudin1965types}). This suggests the following second question:

\begin{problem}
Is it consistent that there is a unique $Q$-point, and it is non-Ramsey?
\end{problem}

\bibliographystyle{plain}
\bibliography{n-q-points}

\end{document}